\documentclass[a4paper,11.5pt]{article}
\usepackage{tikz}
\usetikzlibrary{calc}
\usepackage{amssymb}
\usetikzlibrary{quotes,angles}
\usetikzlibrary{plotmarks}
\usetikzlibrary{arrows,shapes,positioning}
\usetikzlibrary{decorations.markings}
\usepackage{geometry}
\geometry{left=3.5cm,right=3.5cm,top=3cm, bottom=3.5cm}
\tikzstyle arrowstyle=[scale=2]
\tikzstyle directed=[postaction={decorate,decoration={markings,
    mark=at position .65 with {\arrow[arrowstyle]{stealth}}}}]
\tikzstyle reverse directed=[postaction={decorate,decoration={markings,
    mark=at position .65 with {\arrowreversed[arrowstyle]{stealth};}}}]
    \tikzstyle left directed=[postaction={decorate,decoration={markings,
    mark=at position -.62 with {\arrow[arrowstyle]{stealth}}}}]

\tikzstyle left reverse directed=[postaction={decorate,decoration={markings,
    mark=at position -.62 with {\arrowreversed[arrowstyle]{stealth};}}}]
\usepackage{indentfirst}
\usepackage[plainpages=false]{hyperref}
\usepackage{amsfonts,latexsym,rawfonts,amsmath,amssymb,amsthm,mathrsfs}
\usepackage{amsmath,amssymb,amsfonts,latexsym,lscape,rawfonts}

\usepackage[all]{xy}
\usepackage{eufrak}
\usepackage{makeidx}         
\usepackage{graphicx,psfrag}
\usepackage{array,tabularx}

\usepackage{setspace}

\newtheorem{thm}{Theorem}[section]
\newtheorem{cor}[thm]{Corollary}
\newtheorem{lem}[thm]{Lemma}
\newtheorem{clm}[thm]{Claim}
\newtheorem{prop}[thm]{Proposition}

\theoremstyle{remark}
\newtheorem{rmk}[thm]{Remark}

\theoremstyle{definition}
\newtheorem{Def}[thm]{Definition}                                        %

\def \C {\mathbb C}
\def \Z {\mathbb Z}
\def \R {\mathbb R}

\title{Moduli spaces of $G_{2}-$instantons and $Spin(7)-$instantons on product manifolds}

{\small{\author{Yuanqi Wang\thanks{The University of Kansas, Lawrence, KS, USA.\ yqwang@ku.edu; yqwang2541@gmail.com . The previous institute of the author is Simons Center for Geometry and Physics, Stony Brook University, NY, USA.}}}

\date{\vspace{-5ex}}

\begin{document}
\maketitle
\begin{abstract} Let  $X$ be a closed $6-$dimensional manifold with a half-closed $SU(3)-$structure. On the product manifold $X\times S^{1}$, with respect to the product $G_{2}-$structure and on a pullback vector bundle from $X$, we show that any $G_{2}-$instanton  is equivalent to a Hermitian Yang-Mills connection on $X$ via a ``broken gauge". This result reveals the topological type of  the moduli of  $G_{2}-$instantons on $X\times S^{1}$. In dimension $8$, similar result holds for moduli of  $Spin(7)-$instantons. A generalization and an example are  given. 
\end{abstract}

\section{Introduction}
\subsection{Motivation and Background}
Following the programs of Donaldson-Thomas \cite{DonaldsonThomas} and Donaldson-Segal \cite{DonaldsonSegal}, it is tempting to generalize the classical  gauge theory in dimensions $2,3,4$ to dimensions $6,7,8$. In the  classification of holonomy groups by Berger and Simon (\cite{Berger}, \cite{Simons}), these higher dimensions correspond to the special holonomy groups $SU(3)$,  $G_{2}$, and $Spin(7)$. Based on  the programs in \cite{DonaldsonThomas}  and     \cite{DonaldsonSegal}, Walpuski  \cite{Walpuski} and Joyce \cite{Joyce} discussed the possible enumerative invariant on  $7-$dimensional manifolds ``counting"   $G_{2}-$instantons. 

The purpose of this note is to completely classify all $G_{2}-$instantons  (not only those which are invariant under a group action),  of a pullback vector bundle on a trivial circle bundle over a fairly general $6-$dimensional base manifold. Our main result (Theorem \ref{Thm Rigidity} below) shows that any $G_{2}-$instanton in this product setting is equivalent to the pullback of a Hermitian Yang-Mills connection on the $6-$dimensional base via a ``broken gauge" (see Definition \ref{Def ab gauges and iso trivial} below). Similar results also hold for projective $G_{2}-$instantons and $Spin(7)-$instantons.

Before stating the main theorem, we set up our terminology  and illustrate our ideas  along the way. 

The definitions in section \ref{section Building blocks for the product manifolds}--- \ref{section Moduli spaces and their topology} are necessary for our main result. 

 Here is another way to describe our purpose: we  seek for a dimension reduction for moduli of $G_{2}-$instantons on a product manifold $X\times S^{1}$ (trivial circle bundle over $X$). We consider a  $6-$manifold which admits a half-closed $SU(3)-$structure in the  sense of Definition \ref{Def CY quadruple} below.  
 \subsection{Building blocks for the product manifolds\label{section Building blocks for the product manifolds}}

\begin{Def}\label{Def CY quadruple}
Given a  $6$-dimensional  manifold $X$, we say that  $(J, g_{X}, \omega, \Omega)$ is a $SU(3)-$structure   (cf. \cite[(3.1) and the enclosing section]{KMT})  if 
\begin{enumerate} \item $J$ is an almost complex structure, $\Omega$ is a nowhere vanishing $(3,0)-$form.
\item  $g_{X}$ is a Hermitian metric on $X$ i.e. $g_{X}(J\cdot,J\cdot)=g_{X}(\cdot,\cdot)$.  $\omega=g(J\cdot, \cdot)$  is the associated real positive $(1,1)-$form. 
\item $|\Omega|^{2}_{g_{X}}=8$ i.e. $\frac{\omega^{3}}{3!}=\frac{1}{4}Re\Omega\wedge Im\Omega$. 
\end{enumerate}
A $SU(3)-$structure is called half-closed if $dRe\Omega=0$. 
\end{Def}
Throughout, we understand $S^{1}$ as the smooth Riemannian manifold $\R/2\pi \Z$, so its length is $2\pi$. Our main theorem and the proof hold for an arbitrary positive length.  Let $t$ be the coordinate variable of  $\R$ such that $dt$ descends  to the smooth closed (but not exact) $1$-form on $S^{1}$. All manifolds, bundles, gauges, connections, sections etc are assumed to be smooth unless otherwise specified. 
\begin{rmk}\label{Rmk abundant SU(3) str} The half-closed condition is not restricted to $Re\Omega$.   Given a $SU(3)-$structure such that $Im\Omega$ is closed,  then $(\cdot,\cdot,\cdot,\sqrt{-1} \Omega)$ is  half-closed. Given $J,\ \Omega$ as in Definition \ref{Def CY quadruple}.1 such that  $dRe\Omega=0$, by Lemma \ref{lem unitary frame} below, there  exist  abundant Hermitian metrics  $g_{X}$  such that $(J,g_{X},\omega,\Omega)$ is half-closed. \end{rmk}
\begin{rmk}A  half-closed $SU(3)-$structure is  said to be Calabi-Yau if $J$ is integrable, $\omega$ is  closed (K\"ahler), and  $\Omega$ is holomorphic. Then the metric $g_{X}$ must be Ricci flat by the normalization in Definition \ref{Def CY quadruple}.3. 

Another class of half-closed $SU(3)-$structures consists of nearly-K\"ahler $6-$manifolds, including $S^{6}$, $S^{3}\times S^{3}$ etc (see \cite[3.2]{KMT} and \cite{Harland}). 
\end{rmk}
\subsection{$G_{2}$ and $Spin(7)-$structures}
Before defining $G_{2}$ and $Spin(7)-$instantons, we need to define $G_{2}$ and $Spin(7)-$structures. 
\begin{Def}Let $\R^{7}$ be the $7-$dimensional Euclidean vector space with the co-frame\\  $\{e^{i},\ 1\leq i\leq 7\}$. We define the Euclidean associative $3-$form as
\begin{equation}\phi_{Euc}=e^{127}+e^{347}+e^{567}+e^{135}-e^{146}-e^{236}-e^{245},\ \textrm{where}\ e^{ijk}\triangleq e^{i}\wedge e^{j} \wedge e^{k}.\end{equation}
 Given a $7-$manifold $M$, a $G_{2}-$structure $\phi$ is a smooth $3-$form such that at every point $p$, there exists a co-frame $e^{i},\ 1\leq i\leq 7$ such that $\phi(p)=\phi_{Euc}$. $\phi$ determines a Riemannian metric $g_{\phi}$ and an orientation. This orientation is associated to the volume form $e^{1234567}$.  We let $\psi\triangleq \star_{g_{\phi}}\phi$. 
 \end{Def}

Given a $6-$manifold $X$ with a $SU(3)-$structure $(J,g_{X},\omega,\Omega)$, on the $7-$manifold $M\times S^{1}$,  the $3-$form \begin{equation}\label{equ G2 structure on the product} \phi=dt\wedge \omega+Re\Omega\end{equation}
is a $G_{2}-$structure whose induced metric is the product  $g_{X}+dt\otimes dt$. 

Next, we define $Spin(7)-$structures. 
\begin{Def}Let $\R^{8}$ be the $8-$dimensional Euclidean vector space with the co-frame $e^{i}$,\\ $0\leq i\leq 7$. We define the Euclidean Cayley $4-$form as
\begin{equation} \Psi_{Euc}\triangleq e^{0}\wedge \phi_{Euc}+\psi_{Euc}. 
\end{equation}
Given an $8-$dimensional manifold $M^{8}$, a $Spin(7)-$structure $\Psi$ is a $4-$form such that at every point $p$, there exists a co-frame $e^{i},\ 0\leq i\leq 7$ such that $\Psi(p)=\Psi_{Euc}$.
\end{Def}

Let $M$ be a $7-$manifold with a $G_{2}-$structure $\phi$.  On the $8-$dimensional manifold $M\times S^{1}$,   the $4-$form 
\begin{equation}\label{equ Spin7 structure on the product} \Psi=dt\wedge \phi+\psi\end{equation} is a $Spin(7)-$structure. The induced metric is the product $g_{\phi}+dt\otimes dt$.  The orientation is defined by $dt\wedge \phi\wedge \psi$,  so $\Psi$ is self-dual. 

\subsection{Iso-trivial connections}
The following definition of Hermitian vector bundles is the foundation of our discussion. 
\begin{Def}\label{Def isotrivial} Let $Y$ be a closed $n-$dimensional smooth manifold. A  smooth complex vector bundle $E\rightarrow Y$ is called a Hermitian vector bundle, if it admits a Hermitian metric, and the following holds. 
\begin{itemize}\item There is a finite open cover of $Y$, and a unitary trivialization $s_{U}$ of $E$ on each open set $U$ in the cover.

\item Any trivialization can be extended to a larger open set containing the closure of its domain.
\item  Any transition function on a non-empty intersection
 is smooth, and  can be extended smoothly to a larger open set containing the closure of the intersection.\end{itemize}

Let $adE$ denote the bundle of skew-adjoint endomorphisms with respect to the Hermitian metric, and $EndE$ denote the usual endomorphism bundle. Associated to a Hermitian vector bundle $E$, both $adE$ and $EndE$ are still Hermitian vector bundles. 

A gauge is a unitary automorphism that preserves the Hermitian metric. Let $\mathfrak{G}$ denote  the space of all smooth gauges. Given a  connection $A$ and $u\in \mathfrak{G}$, we adopt the convention $u(A)=A+u^{-1}d_{A}u$ i.e. $d_{u(A)}\triangleq u^{-1}\cdot d_{A}\cdot u.$ Then
\begin{equation}\label{equ right multiplication of gauges}v[u(A)]=(uv)(A)\ i.e.\ \textrm{the gauge-action is a right multiplication}.\end{equation} 

\textbf{Convention}: unless otherwise specified, all gauges and connections  are assumed to be unitary.

In order to study whether a gauge on the manifold $Y\times (0,2\pi)$ extends to a gauge on $Y\times S^{1}$, we introduce the following definition. 
\begin{Def}\label{Def smooth periodicity}(Smooth periodicity) Let $\pi$ denote the projection from $Y\times S^{1}$ (or $Y\times I$ for any interval $I\subset \R$) to $Y$. A smooth section (or connection) $v$ of $\pi^{\star}E\rightarrow Y\times [0,2\pi]$ 
is said to be periodic if $v(0)=v(2\pi)$. It  is said to be smoothly periodic  if it extends to  a smooth section (connection) of $\pi^{\star}E\rightarrow Y\times S^{1}$. 
\end{Def}

 We are particularly interested in the irreducible connections defined as follows. Given a smooth connection $B$ on $E\rightarrow Y$,  we define the stabilizer group as
 \begin{equation}\label{equ def Stab} \Gamma_{B}\triangleq \{u\in\mathfrak{G}|\ d_{B}u=0\}.
\end{equation}
 $B$ is said to be irreducible if $\Gamma_{B}=Center[U(m)]$ [which is homeomorphic to $U(1)$ and  $S^{1}$]. Abusing notation, we still denote by $B$ the pullback of the connection $B$ on $Y$ to $\pi^{\star}E\rightarrow Y\times {S}^{1}$ (or $\pi^{\star}E\rightarrow Y\times [0,2\pi]$).

We now define the aforementioned ``broken gauges". In our terminology, on $Y\times S^{1}$, \textit{a ``broken gauge" in general is not a gauge}. 

 As we shall see below, a version of ``monodromy" phenomenon is implicitly contained in the definition and remarks for the ``broken gauges".
\begin{Def}\label{Def ab gauges and iso trivial} (Admissible broken gauges and iso-triviality)

Given a smooth connection $B$ on $Y$, a smooth gauge $u$ on $\pi^{\star}E\rightarrow Y\times [0,2\pi]$ (see Definition \ref{Def Cinfty top} below) is called a $B-$admissible broken gauge (or admissible broken gauge for short) if $u(0)=Id$, $u(2\pi)\in \Gamma_{B}$, and $\chi_{u}\triangleq u^{-1}\frac{du}{dt}$ is smoothly periodic. A smooth connection $A$ on  $\pi^{\star}E\rightarrow Y\times S^{1}$ is said to be iso-trivial with respect to $B$, if there exists a $B-$admissible broken gauge $u$  such that $A=u(B)$.

In practice, we abbreviate ``iso-trivial with respect to $B$" to  `` iso-trivial", because our notation $u(B)$ (or similar) and/or the context  should clarify what  the $B$ is.   \end{Def}

\end{Def}
We stress again that, on a product manifold $Y\times S^{1}$,  \textit{the notion of a $B-$admissible broken gauge is completely different from the notion of a gauge}.  A $B-$admissible  broken gauge is  a gauge only if it is periodic i.e.  $u(0)=u(2\pi)=Id$.
\begin{rmk}\label{rmk isotrivial connections are smooth}
Conversely, by the  criteria in Claim \ref{clm periodic gauges} below, we routinely verify that for any connection $B$ on $E\rightarrow Y$, and  a $B$-admissible broken gauge $u$, $u(B)$ is a 
smooth connection on $\pi^{\star}E\rightarrow Y\times S^{1}$.\end{rmk}
\begin{rmk} Iso-triviality   is preserved by gauge-transformations on $Y\times S^{1}$. Please see Proposition \ref{Prop gauge equivalence characterization} below on gauge equivalence of iso-trivial connections.  \end{rmk}

\subsection{Hermitian Yang-Mills connections, $G_{2}-$instantons, and\\ $Spin(7)-$instantons}
We will compare the moduli of $G_{2}-$instantons to the moduli of Hermitian Yang-Mills connections, and compare the moduli of $Spin(7)-$instantons to the moduli $G_{2}-$instantons. We start by defining a Hermitian Yang-Mills connection.  \begin{Def}\label{Def HYM}Given an almost complex $6-$manifold  $X$ with a positive real $(1,1)-$form $\omega$, and a Hermitian vector bundle $E\rightarrow X$ (see Definition \ref{Def isotrivial}), a (unitary) connection $A$ is said to be Hermitian Yang-Mills if $F_{A}$ is $(1,1)$ and $\frac{\sqrt{-1}}{2\pi}F_{A}\lrcorner \omega=\mu Id_{E}$ for a real number $\mu$. The $\mu$ is called the slope of $A$. 

 When $\omega$ is co-closed i.e. $d(\omega\wedge \omega)=0$, let the degree of $E$ be defined by  $$degE\triangleq \frac{1}{2Vol_{\omega}X}\int_{X}c_{1}(E)\wedge \omega\wedge \omega,$$ where $Vol_{\omega}X=\int_{X}\frac{\omega^{3}}{3!}$. The slope of any Hermitian Yang-Mills connection on $E$ must be  $\frac{degE}{rankE}$. \end{Def}
\begin{rmk} The contraction``$\lrcorner$" between two forms, in any context, is with respect to the underlying Riemannian metric. For example, in Definition \ref{Def HYM} above, $\lrcorner=\lrcorner_{\omega}$ means the contraction with respect to (the Riemannian metric of) $\omega$. In \eqref{equ def G2 instanton} and \eqref{equ def proj G2 instanton} below,  $\lrcorner=\lrcorner_{g_{\phi}}$ means the contraction with respect to the metric of the $G_{2}-$structure $\phi$.
\end{rmk}

Next, we define $G_{2}-$instantons. 
\begin{Def}\label{Def instantons}Let $(M,\phi)$ be a $7-$manifold with a $G_{2}-$structure.  A connection $A$ on $E\rightarrow M$ is called a $G_{2}-$instanton if \begin{equation}\label{equ def G2 instanton}\star(F_{A}\wedge \psi)=0\
(\textrm{which is equivalent to}\  F_{A}\lrcorner\phi=0).\end{equation}
The connection $A$ is called a projective $G_{2}-$instanton, if there is a harmonic $\R$-valued $1-$form $\theta$ such that 
 \begin{equation}\label{equ def proj G2 instanton}\frac{\sqrt{-1}}{2\pi}\star(F_{A}\wedge \psi)=\theta Id_{E}\ (\textrm{which is equivalent to}\ \frac{\sqrt{-1}}{2\pi}(F_{A}\lrcorner \phi)=\theta Id_{E}).\end{equation}

Similarly, we define $Spin(7)-$instantons. Let $(M^{8},\Psi)$ be an $8-$manifold with a  $Spin(7)$ structure. A connection $A$ on a bundle $E\rightarrow M^{8}$ is called a $Spin(7)-$instanton if
\begin{equation}\label{equ def spin7 instanton equation}\star_{g_{\Psi}}(F_{A}\wedge \Psi)+F_{A}=0.\end{equation}
\end{Def}

 \subsection{Moduli spaces and their topology\label{section Moduli spaces and their topology}}
 We define the moduli spaces (of gauge equivalence classes) of the $3$ kinds of connections in the previous section. 
 \begin{Def}\label{Def moduli}  In view of Definition \ref{Def HYM} and \ref{Def instantons}, let \begin{equation}\mathfrak{M}_{X,E,\omega-HYM},\ \mathfrak{M}_{X,E,\omega-HYM-0},\ \mathfrak{M}_{M,E,\phi},\ \mathfrak{M}^{proj}_{M,E,\phi},\  \mathfrak{M}_{M^{8},E,\Psi},\end{equation}
 denote the set of all gauge equivalence-classes of smooth Hermitian Yang-Mills connections on $E\rightarrow X$, Hermitian Yang-Mills connections with $0-$slope on $E\rightarrow X$,
 $G_{2}-$instantons on $E\rightarrow M$, projective $G_{2}-$instantons on $E\rightarrow M$, and $Spin(7)-$instantons on $E\rightarrow M^{8}$ respectively. Let \begin{equation}\mathfrak{M}^{irred}_{X,E,\omega-HYM},\ \mathfrak{M}^{irred}_{X,E,\omega-HYM-0},\ \mathfrak{M}^{irred}_{M,E,\phi},\ \mathfrak{M}^{proj,irred}_{M,E,\phi},\  \mathfrak{M}^{irred}_{M^{8},E,\Psi}\end{equation}
denote respectively the subsets of all  irreducible (gauge equivalence-classes of) connections. 
\end{Def}
 We now recall some classical material about Hermitian Yang-Mills connections on a K\"ahler manifold and stability of a holomorphic vector bundle. 
\begin{Def}\label{Def DUY} Over a (closed) K\"ahler $3-$fold $(X_{kah},\omega)$, let $E$ be a Hermitian vector bundle. A holomorphic bundle $(E,\bar{\partial}_{\alpha})$ (on the topologic bundle $E$) is said to be slope-stable, if for any torsion free coherent sub-sheaf $\mathcal{F}$ such that $0<rank\mathcal{F}<rank E$, $\mu(\mathcal{F})<\mu(E)$. We say that $(E,\bar{\partial}_{\alpha})$ is poly-stable if it is a direct sum of stable bundles of the same slope. 

Let $\mathfrak{M}^{AG}_{X_{kah},E,[\omega]-stable}$ denote the set of all isomorphism classes of $[\omega]-$slope-stable holomorphic structures on  $E\rightarrow X_{kah}$. It is an algebro-geometric moduli.  Donaldson-Uhlenbeck-Yau Theorem (\cite{Donaldsonpaper1}, \cite{UY}, \cite{Donaldsonpaper2}) implies that for any holomorphic structure $\bar{\partial}_{\alpha}$, the following two conditions are equivalent (also see the  presentation in \cite[Theorem 8.3]{Kobayashi}). 
\begin{itemize}\item  There is a Hermitian Yang-Mills connection such that the induced holomorphic structure is isomorphic to $\bar{\partial}_{\alpha}$.
\item$(E,\bar{\partial}_{\alpha})$ is poly-stable.
\end{itemize} 
A Hermitian Yang-Mills connection induces a stable holomorphic structure if and only if it is irreducible. Moreover, the natural map  \begin{equation}\mathfrak{M}^{irred}_{X_{kah},E,\omega-HYM}\rightarrow \label{DUY}
 \mathfrak{M}^{AG}_{X_{kah},E,[\omega]-stable}
\end{equation}
is a bijection. 
\end{Def}

The moduli spaces in Definition \ref{Def moduli} can be equipped with natural topologies as follows. 
\begin{Def}(Topology of the moduli spaces)\label{Def top} Let $|\cdot|$ denote  the standard metric for  complex matrices i.e.
$$|A|^{2}=Trace(A A^{\star}),$$
 and $E\rightarrow Y$ be a Hermitian vector bundle (see Definition \ref{Def isotrivial}). Using a Riemannian metric on $Y$ (which should be clear from the context in practice),   $|\cdot|$  extends to a metric on $\Omega^{k}(adE)|_{p}$ ($\Omega^{k}(EndE)|_{p}$) for any $p\in Y$. We still denote this metric by $|\cdot|$.\begin{itemize}\item  Let $\Lambda_{E,Y}$ ($\Lambda^{irred}_{E,Y}$) denote the space of all (irreducible) gauge equivalence classes of  smooth (unitary) connections respectively. Similarly to \cite[(4.2.3)]{Donaldson}, we define a metric  on the space $\Lambda_{E,Y}$ of connections as the following. 
\begin{equation}\label{equ Def dist and norm on moduli}d_{\Lambda_{E,Y}}([A_{1}],[A_{2}])\triangleq \inf_{g\in \mathfrak{G}}|| A_{1}-g(A_{2})||.\end{equation}
The $||\cdot||$ above is a metric on $\Omega^{1}(adE) \ (\textrm{and}\ \Omega^{1}(EndE))\rightarrow Y$ defined by\\ $||\cdot||\triangleq sup_{p\in Y}|\cdot|.$ It is  invariant under the linear actions of $\mathfrak{G}$ by both  left and right multiplication in the endomorphism part. 

In general, the metric $d_{\Lambda_{E,Y}}$ induces a metric topology on any subset of $\Lambda_{E,Y}$, including those moduli spaces in Definition \ref{Def moduli} and in the main Theorem \ref{Thm Rigidity} below. 

\item Let $G$ be a compact subgroup of the gauge group $\mathfrak{G}$,  and let $CON(G)$ denote the space of all conjugacy classes of $G$. Based on the above definition, let $x,y\in CON(G)$, we consider the following  metric and the associated topology on $CON(G)$. \begin{equation} \label{equ 0 Def top} d_{CON(G)}(x,y)=\inf_{g\in G}||x-gyg^{-1}||.
\end{equation}

In practice, $G$ will usually be the stabilizer group of a connection.
\end{itemize}\end{Def}
\subsection{Main Statement}
Our  fully set up  terminology above is at our disposal to state the main result. It classifies all $G_{2}-$instantons ($Spin(7)$-instantons) on the product manifolds, and  confirms the existence of a ``dimension reduction" for their moduli spaces. In section $\mathbb{I}$ of the main result below,  we state  $1:$ the equivalent condition for the existence of a $G_{2}-$instanton; $2:$ ``broken gauge" equivalence of a $G_{2}-$instanton on the trivial circle bundle and a Hermitian Yang-Mills connection on the base;  $3$ and $4$: a  ``fibration" structure of the moduli spaces. \textit{The statements in $\mathbb{II}$ for $Spin(7)-$instantons and $\mathbb{III}$ for projective $G_{2}-$instantons parallel those in $\mathbb{I}$}. 
\begin{thm}\label{Thm Rigidity}$\mathbb{I}$:  Given a  $6-$dimensional manifold  $X$ with a half-closed $SU(3)-$structure  $(J,g_{X},\omega,\Omega)$ and a Hermitian vector bundle $E\rightarrow X$, on the pullback bundle\\ $\pi^{\star}(E)\rightarrow X\times S^{1}$ and with respect to the product $G_{2}-$structure  \eqref{equ G2 structure on the product} on $X\times S^{1}$,  the  following is true. 
\begin{enumerate}\item $\pi^{\star}E\rightarrow X\times S^{1}$ admits a $G_{2}-$instanton if and only if $E\rightarrow X$ admits a Hermitian Yang-Mills connection with $0-$slope.

 Consequently, when $(X,J,g_{X},\omega,\Omega)$ is Calabi-Yau, $\pi^{\star}E\rightarrow X\times S^{1}$ admits a $G_{2}-$instanton if and only if $E\rightarrow X$ admits a poly-stable holomorphic structure and $degE=0$. 
\item A connection  on $\pi^{\star}E\rightarrow X\times S^{1}$ is a $G_{2}-$instanton if and only if it is iso-trivial with respect to a Hermitian Yang-Mills connection with $0-$slope on $E\rightarrow X$. 
\item $\mathfrak{M}_{X\times S^{1}, \pi^{\star}E, \phi}$, if non-empty,  admits a continuous surjective map $\rho$ to  $\mathfrak{M}_{X,E,\omega-HYM-0}$. For any $[B]\in \mathfrak{M}_{X,E,\omega-HYM-0}$, $\rho^{-1}([B])$ is homeomorphic  to $CON(\Gamma_{B})$. 
\item $\mathfrak{M}^{irred}_{ X\times S^{1},\pi^{\star}E,\phi}=\rho^{-1}(\mathfrak{M}^{irred}_{X,E,\omega-HYM-0})$, and both of them are homeomorphic to\\ $S^{1}\times \mathfrak{M}^{irred}_{X,E,\omega-HYM-0}$. 

 Consequently, when $(X,J,g_{X},\omega,\Omega)$ is Calabi-Yau and $degE=0$, $\mathfrak{M}^{irred}_{X\times S^{1}, \pi^{\star}E,\phi}$  is bijective to $S^{1}\times \mathfrak{M}^{AG}_{X,E,[\omega]-stable}$. 
\end{enumerate}

$\mathbb{II}$: Given a $7-$dimensional manifold $M$ with a co-closed $G_{2}-$structure $\phi$,  and a Hermitian vector bundle $E\rightarrow M$, on the pullback bundle $\pi^{\star}E\rightarrow M\times S^{1}$ and with respect to the product $Spin(7)-$structure on $M\times S^{1}$ in \eqref{equ Spin7 structure on the product}, the following is true.  
\begin{enumerate}\item $\pi^{\star}E\rightarrow M\times S^{1}$ admits a $Spin(7)-$instanton if and only if $E\rightarrow M$ admits a $G_{2}-$instanton. 
\item A connection on $\pi^{\star}E\rightarrow M \times S^{1}$ is a   $Spin(7)-$instanton if and only if it is iso-trivial with respect to a $G_{2}-$instanton on  $E\rightarrow M$. 
\item $\mathfrak{M}_{M\times S^{1},\pi^{\star}E,\Psi}$, if non-empty, admits a continuous surjective map $\rho$ to $\mathfrak{M}_{M,E,\phi}$. For any $[B]\in \mathfrak{M}_{M,E,\phi}$, $\rho^{-1}([B])$ is homeomorphic  to $CON(\Gamma_{B})$.
 \item $\mathfrak{M}^{irred}_{ M\times S^{1},\pi^{\star}E,\Psi}=\rho^{-1}(\mathfrak{M}^{irred}_{M,E,\phi})$, and both of them are homeomorphic to $S^{1}\times \mathfrak{M}^{irred}_{M,E,\phi}$. 
\end{enumerate}

$\mathbb{III}$ (projective version of $\mathbb{I}$): Under the same conditions and setting in  $\mathbb{I}$, we assume additionally that $H^{1}(X,\R)=0$.  Then the following is true.
\begin{enumerate}\item $\pi^{\star}E\rightarrow X\times S^{1}$ admits a projective $G_{2}-$instanton if and only if $E\rightarrow X$ admits a Hermitian Yang-Mills connection. 

Consequently, when $(X,J,g_{X},\omega,\Omega)$ is Calabi-Yau, $\pi^{\star}E\rightarrow X\times S^{1}$ admits a projective $G_{2}-$instanton if and only if $E\rightarrow X$ admits a poly-stable holomorphic structure. 
\item A connection  on $\pi^{\star}E\rightarrow X\times S^{1}$ is a projective $G_{2}-$instanton if and only if it is iso-trivial with respect to a Hermitian Yang-Mills connection  on $E\rightarrow X$.

\item $\mathfrak{M}^{proj}_{X\times S^{1},\pi^{\star}E,\phi}$, if non-empty,  admits a continuous surjective map $\rho$ to  $\mathfrak{M}_{X,E,\omega-HYM}$. For any $[B]\in \mathfrak{M}_{X,E,\omega-HYM}$, $\rho^{-1}([B])$ is homeomorphic  to $CON(\Gamma_{B})$. 
\item $\mathfrak{M}^{proj,\ irred}_{X\times S^{1},\pi^{\star}E,\phi}=\rho^{-1}(\mathfrak{M}^{irred}_{X,E,\omega-HYM})$, and both of them are homeomorphic to\\ $S^{1}\times \mathfrak{M}^{irred}_{X,E,\omega-HYM}$. Consequently, when $(X,J,g_{X},\omega,\Omega)$ is Calabi-Yau, $\mathfrak{M}^{proj,irred}_{X\times S^{1},\pi^{\star}E,\phi}$ is bijective to $S^{1}\times \mathfrak{M}^{AG}_{X,E,[\omega]-stable}$. 
\end{enumerate}
\end{thm}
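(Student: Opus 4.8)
\emph{Strategy.} The plan is to reduce every clause of $\mathbb I,\mathbb{II},\mathbb{III}$ to the connection-level assertions $\mathbb I.2$, $\mathbb{II}.2$, $\mathbb{III}.2$, since the existence statements $(\cdot).1$ are then immediate and the moduli descriptions $(\cdot).3,(\cdot).4$ are bookkeeping on top of them. First I would set up the reduced equations. Writing a connection on $\pi^{\star}E\to X\times S^{1}$ in the temporal form $A=A_{t}+\Phi\,dt$, with $A_{t}$ a loop of connections on $E\to X$ and $\Phi$ a loop in $\Gamma(adE)$, its curvature splits as $F_{A}=F_{A_{t}}+dt\wedge a_{t}$ where $a_{t}\triangleq\dot A_{t}-d_{A_{t}}\Phi$. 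Using $\psi=\star_{g_{\phi}}\phi=\tfrac12\omega^{2}-dt\wedge Im\Omega$ and expanding $F_{A}\wedge\psi=0$ (equivalent to \eqref{equ def G2 instanton}) into its $dt$-free and $dt$-parts, I expect the $G_{2}$-instanton equation to be equivalent to the pair: $(1)$ $F_{A_{t}}\wedge\omega^{2}=0$, i.e. the $(1,1)$-trace $\Lambda F_{A_{t}}$ vanishes for every $t$; and $(2)$ $a_{t}\wedge\tfrac12\omega^{2}=F_{A_{t}}\wedge Im\Omega$, which—because $\alpha\mapsto\alpha\wedge\omega^{2}$ is the Lefschetz isomorphism $\Omega^{1}\to\Omega^{5}$ and $F_{A_{t}}\wedge Im\Omega$ only sees $F^{2,0}_{A_{t}}+F^{0,2}_{A_{t}}$ on a complex threefold—expresses $a_{t}$ algebraically through the $(2,0)+(0,2)$ part of the curvature. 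The component count $1+6=7=\dim\Lambda^{2}_{7}$ confirms that $(1)$ and $(2)$ capture the full instanton condition.

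\emph{The two directions.} For the easy implication, given a $0$-slope Hermitian--Yang--Mills $B$ and a $B$-admissible $u$, the pullback $\pi^{\star}B$ satisfies $(1)$ and $(2)$ with $a\equiv0$ (its curvature is $(1,1)$ and trace-free), hence is a $G_{2}$-instanton; since $u(B)$ is locally gauge-equivalent to $\pi^{\star}B$ on $X\times(0,2\pi)$ and extends smoothly across $S^{1}$ by Remark \ref{rmk isotrivial connections are smooth} and Claim \ref{clm periodic gauges}, $u(B)$ is a $G_{2}$-instanton. The hard implication is the heart of the matter. Given a $G_{2}$-instanton $A$, I would first gauge it into temporal gauge $\Phi\equiv0$ by solving $\partial_{t}g=g\Phi$ with $g(0)=Id$ on $X\times[0,2\pi]$; setting $\sigma\triangleq g(2\pi)$ and still denoting the gauged connection by $A_{t}$, periodicity gives $A_{2\pi}=\sigma(A_{0})$. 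In this gauge $(2)$ becomes the flow $\dot A_{t}\wedge\tfrac12\omega^{2}=F_{A_{t}}\wedge Im\Omega$, which I expect to be the downward $L^{2}$-gradient flow of a Chern--Simons-type functional $\vartheta$ (built from $\Omega$) whose critical points are exactly the connections with $F^{0,2}=0$, so that $\tfrac{d}{dt}\vartheta=-\|\dot A_{t}\|^{2}_{L^{2}(X)}$. Because $\sigma$ lies in the connected group $\Gamma_{B}$ and $\vartheta$ is invariant under the identity component of the gauge group, integrating over the circle yields $\int_{0}^{2\pi}\|\dot A_{t}\|^{2}\,dt=\vartheta(A_{0})-\vartheta(\sigma(A_{0}))=0$; hence $\dot A_{t}\equiv0$, the loop is a constant $B$, and $(2)$ with $a\equiv0$ forces $F^{0,2}_{B}=0$ while $(1)$ gives $\Lambda F_{B}=0$, so $B$ is $0$-slope Hermitian--Yang--Mills. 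Finally $\sigma(B)=B$, i.e. $\sigma\in\Gamma_{B}$, so $u\triangleq g^{-1}$ is $B$-admissible and $A=u(B)$ is iso-trivial.

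\emph{Moduli.} With $\mathbb I.2$ in hand I define $\rho([A])\triangleq[B]$; it is well-defined and surjective (since $\pi^{\star}B$ lies over $[B]$) and continuous for the metrics of Definition \ref{Def top}. Over a fixed $[B]$, an iso-trivial $A=u(B)$ is determined up to gauge on $X\times S^{1}$ by the conjugacy class of its holonomy $\sigma=u(2\pi)$ inside $\Gamma_{B}$: the interior $t$-dependence of $u$ is removable by a gauge, while a residual $t$-independent gauge $h\in\Gamma_{B}$ acts on $\sigma$ by conjugation. This should give $\rho^{-1}([B])\cong CON(\Gamma_{B})$, and one then checks the identification is a homeomorphism. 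For irreducible $B$ one has $\Gamma_{B}=Center[U(m)]\cong S^{1}$, where conjugation is trivial, so each fiber is $S^{1}$ and—after verifying the fibration is globally a product—$\mathfrak{M}^{irred}_{X\times S^{1},\pi^{\star}E,\phi}\cong S^{1}\times\mathfrak{M}^{irred}_{X,E,\omega-HYM-0}$. The Calabi--Yau corollaries in $\mathbb I.1$ and $\mathbb I.4$ follow by substituting the Donaldson--Uhlenbeck--Yau identification \eqref{DUY} of Definition \ref{Def DUY}.

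\emph{Variants and the main obstacle.} Part $\mathbb{II}$ runs identically with $\Psi=dt\wedge\phi+\psi$ in place of $\phi$: expanding \eqref{equ def spin7 instanton equation} gives the slice condition that $F_{A_{t}}$ be a $G_{2}$-instanton on $M$ together with the same kind of gradient flow, whose only periodic orbits (using the co-closedness $d\psi=0$ of $\phi$) are constant. For Part $\mathbb{III}$ the hypothesis $H^{1}(X,\R)=0$ forces every harmonic $1$-form on $X\times S^{1}$ to be a constant multiple of $dt$; feeding $\theta=c\,dt$ into \eqref{equ def proj G2 instanton} leaves the flow $(2)$ unchanged but shifts $(1)$ to $\Lambda F_{A_{t}}=\mathrm{const}\cdot Id$, so the reduction now lands in Hermitian--Yang--Mills of slope $degE/rankE$, and the remaining arguments are verbatim. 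I expect the main obstacle to be the $t$-independence step in the hard direction: constructing $\vartheta$ with the precise monotonicity and correctly handling that the trajectory closes only up to the stabilizer element $\sigma\in\Gamma_{B}$ rather than up to the identity. A secondary delicate point is verifying that $\rho^{-1}([B])\cong CON(\Gamma_{B})$ is a homeomorphism and that the irreducible locus splits as a global product $S^{1}\times(\cdot)$.
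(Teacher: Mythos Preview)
Your strategy is the paper's strategy: split $A=A_{t}+\Phi\,dt$, pass to temporal gauge, recognize the resulting flow as the gradient of a Chern--Simons functional, use gauge-path invariance of that functional to force the flow to be constant, and then read off iso-triviality; the moduli statements are handled exactly as you outline via the holonomy $u(2\pi)$ and Proposition~\ref{Prop gauge equivalence characterization}, Lemma~\ref{lem there exists a gauge connecting id to arbitrary a}, Lemma~\ref{lem irred}.

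Two small points deserve tightening. First, the sentence ``Because $\sigma$ lies in the connected group $\Gamma_{B}$'' is circular: at that stage $B$ has not yet been produced and there is no reason $\sigma$ stabilizes anything. The correct (and sufficient) reason that $\vartheta(A_{0})=\vartheta(\sigma(A_{0}))$ is simply that the temporal-gauge path $g(t)$ already connects $Id$ to $\sigma$ inside the full gauge group, and your own clause ``$\vartheta$ is invariant under the identity component of the gauge group'' (the paper's Lemma~\ref{lem CS constant along gauge orbit}) does the work; only \emph{after} concluding $\dot A_{t}\equiv 0$ do you get $\sigma\in\Gamma_{B}$. Second, ``built from $\Omega$'' hides exactly where the half-closed hypothesis enters: the functional must be $CS_{X,Re\Omega}$, because both the first-variation formula and the gauge-path invariance require the auxiliary form to be \emph{closed}, and $dRe\Omega=0$ is the only closure you have (likewise $d\psi=0$ in $\mathbb{II}$). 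With those two fixes your outline matches the paper's proof.
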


\begin{rmk}\label{rmk non pull back instantons} The pullback of any  Hermitian Yang-Mills connection $B$ with $0-$slope on\\ $E\rightarrow X$ to $\pi^{\star}E\rightarrow X\times S^{1}$ is a $G_{2}-$instanton (see \cite[Example 1.93]{Walpuski} for example). Proposition \ref{Prop gauge equivalence characterization}  and Lemma \ref{lem there exists a gauge connecting id to arbitrary a} below imply that there exist $G_{2}-$instantons on the product manifold which is not  gauge equivalent to any such pullback. Nevertheless, by Theorem \ref{Thm Rigidity}.$\mathbb{I}.2$, any such instanton must be iso-trivial even if it is not a pullback. 
\end{rmk}
\begin{rmk} Investigations by Walpuski, S\'a Earp, Nordstr\"om, Menet etc show that the moduli of  $G_{2}-$instantons on certain closed $7-$manifolds are non-empty (see \cite{Walpuski}, \cite{Menet} and the references therein). The point of this note is the full moduli. 
\end{rmk}
\begin{rmk}When the $G_{2}-$structure $\phi$ on $X\times S^{1}$ is not co-closed, it seems  natural to work with  $G_{2}-$monopoles rather than instantons (see \cite[(25) and the enclosing page]{DonaldsonSegal}). However,  the proof of Theorem \ref{Thm Rigidity}.$\mathbb{I}$ indicates that it is reasonable to  work with  instantons. 
\end{rmk}
Schematically, we can understand Theorem \ref{Thm Rigidity}.$\mathbb{I}.4$ as follows:  \textit{if the $7-$manifold is a trivial circle bundle over a certain $6-$manifold} satisfying certain conditions, then under the special data above, \textit{the moduli of irreducible $G_{2}-$instantons on the $7-$manifold is also a trivial circle bundle over the moduli of irreducible Hermitian Yang-Mills connections with $0-$slope on the $6-$manifold}. 
\subsection{Ideas of the proof}
We sketch of the proof of Theorem \ref{Thm Rigidity}.$\mathbb{I}$ as follows. The proof of $\mathbb{II}$ and $\mathbb{III}$ is similar. 

Step 1: Similarly to the $3-$dimensional case, modulo gauge,  a   $G_{2}-$instanton on $X\times S^{1}$ can be understood as a``periodic" orbit of the gradient flow  of the Chern-Simons functional on $X$ [see \eqref{equ condition 1 G2 instant on product} and \eqref{equ condition 2 G2 instant on product}]. The point is that, although the Chern-Simons functional is not necessarily  gauge invariant, it is invariant along any smooth one-parameter gauge orbit (Lemma \ref{lem CS constant along gauge orbit}). Applying a smooth one-parameter family of gauges (initiated from $Id_{E}$) to the instanton equation  \eqref{equ condition 1 G2 instant on product},  the monotonicity implies  that the curvature term in  \eqref{equ condition 1 G2 instant on product} vanishes. Then Lemma \ref{lem ddt of gauge action on a connection} below allows  us to ``integrate" the instanton equation, which shows that the instanton is iso-trivial with respect to a connection $B$ on the base manifold.

Step 2: To establish the bijection from $CON(\Gamma_{B})$ to $G_{2}-$instantons iso-trivial with respect to $B$ (Lemma \ref{lem easy continuity}), we need the existence (Lemma \ref{lem there exists a gauge connecting id to arbitrary a}) saying that any element in $\Gamma_{B}$ can be connected to $Id_{E}$ via a $B-$admissible broken gauge. The structure group (of the bundle) being $U(m)$ is crucial for this purpose. The argument does not generalize obviously to $SU(m)$ or $SO(m)$. 

Step 3: The properties of the natural topology in Definition \ref{Def top} yield the continuity and homeomorphism properties of the maps characterizing the moduli space (see Proposition \ref{prop hard continuity} below). 

 We hope that the following additional diagram might be helpful. 
 \begin{center}
\begin{tikzpicture}[->,>=stealth',shorten >=1pt,auto,node distance=2.5cm,
  thick,main node/.style={rectangle,draw}]

   \node[draw,align=left] at (0,0) (1)  {Theorem \ref{Thm Rigidity} $\mathbb{I}1-2$,\\  $\mathbb{II}1-2, \mathbb{III}1-2$} ;
     \node[draw,align=left] (2) at (0,-1) {Theorem \ref{Thm Rigidity} $\mathbb{I}3-4$,\\  $\mathbb{II}3-4, \mathbb{III}3-4$} ;
      \node[draw,align=left] (3) [above of=1]  {Prop \ref{Prop gauge equivalence characterization}\\ (criterion for\\ gauge equivalence)} ;
     \node[draw,align=left] (4) at (-4,0.5) {Lem \ref{lem ddt of gauge action on a connection}\\ ``integration" of\\ the endorphisms)} ;
      \node[draw,align=left] (5) at (5,0) {Lem \ref{lem CS constant along gauge orbit}\\ (invariance of the\\ Chern-Simons functional\\ along smooth gauge orbits)} ;
      \node[draw,align=left] (6) at (-3,2.5) {Lem \ref{lem there exists a gauge connecting id to arbitrary a}\\ (existence of \\ $B-$admissible\\ broken gauge)} ;  \node[draw,align=left] at (7,2.7) {The\\ arrows\\ mean\\ implying.};
        \node[draw,align=left] (7) at (3.5,2.5) {Lem \ref{first variation of CS} \\ (first variation\\ and monotonicity \\ of the Chern-Simons\\ function)} ;
            \node[draw,align=left] (8) at (-4,-1) {Lem \ref{lem irred}\\ (irreducibility)} ;  
                  \node[draw,align=left] (9) at (0,-2.5)  {Prop \ref{prop hard continuity}} ;
                \node[draw,align=left] (10) at (2.5,-2.5)  {Cor \ref{cor rho continuous}} ;
                       \node[draw,align=left] (11) at (5,-1.7)  {Lem \ref{Lem top up and down}} ;
                        \node[draw,align=left] (12) at (-3,-2.5)  {Lem \ref{lem easy continuity}} ;
       \path[every node/.style={font=\sffamily\small}]
    (3) edge node [right] {} (1)
    (4) edge node [right] {} (1)
    (5) edge node [right] {} (1)
      (6) edge node [right] {} (1)
    (7) edge node [right] {} (1)
    (8) edge node [right] {} (2)
      (9) edge node [right] {} (2)
    (10) edge node [right] {} (2)
    (11) edge node [right] {} (10)
     (12) edge node [right] {} (9);
 \end{tikzpicture}
\end{center}

Results on $S^{1}-$invariant $G_{2}-$instantons on  Calabi-Yau links are obtained by Calvo-Andrade -Rodr\'iguez D\'iaz-S\'a Earp \cite{Earp}.
\subsection{Simple examples}

We now  attempt to find new examples. Except for trivial bundles on Calabi-Yau manifolds$\times S^{1}$, on which all instantons with respect to the product $G_{2}-$structure are flat, it is hard to determine the topological type of a  moduli of $G_{2}-$instantons.  Nevertheless,  we do obtain the topological type of the moduli of projective $G_{2}-$instantons on a certain non-trivial bundle.  \begin{cor}\label{cor example Thomas} There exist a  smooth anti-canonical  hyper-surface $X_{CY}$ in\\ $CP^{1}\times CP^{1}\times CP^{2}$, a K\"ahler-metric $\omega$ on $X_{CY}$, a nowhere-vanishing holomorphic $(3,0)-$form $\Omega$ on $X_{CY}$, and a rank $2$ Hermitian vector bundle $E\rightarrow X_{CY}$ with the following property. Let $\phi$  be as  \eqref{equ G2 structure on the product}, then $\mathfrak{M}^{proj}_{X_{CY}\times S^{1},\pi^{\star}E, \phi}$ and $\mathfrak{M}^{proj,irred}_{X_{CY}\times S^{1},\pi^{\star}E, \phi}$ are both  homeomorphic to $S^{1}$.
\end{cor}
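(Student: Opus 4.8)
The plan is to apply Theorem \ref{Thm Rigidity}.$\mathbb{III}$ to a well-chosen pair $(X_{CY},E)$, reducing the entire statement to one algebro-geometric assertion: that $E$ carries exactly one poly-stable holomorphic structure up to isomorphism and that this structure is stable. A smooth anti-canonical hypersurface $X_{CY}\subset CP^{1}\times CP^{1}\times CP^{2}$ is Calabi-Yau by adjunction, is simply connected with $H^{1}(X_{CY},\R)=0$ by the Lefschetz hyperplane theorem, and (Yau) carries a Ricci-flat K\"ahler form $\omega$ in any prescribed K\"ahler class together with a nowhere-vanishing holomorphic $(3,0)$-form $\Omega$; after rescaling $\Omega$ this is a Calabi-Yau $SU(3)$-structure in the sense of Definition \ref{Def CY quadruple}, so the hypotheses of Theorem \ref{Thm Rigidity}.$\mathbb{III}$ hold. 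Suppose we can produce a $U(2)$-bundle $E$ for which $\mathfrak{M}_{X_{CY},E,\omega-HYM}$ is a single point $[B]$ with $B$ irreducible. Since $\Gamma_{B}=Center[U(2)]\cong U(1)$ is abelian, its conjugacy classes are singletons, so $CON(\Gamma_{B})=\Gamma_{B}\cong S^{1}$ in the metric of Definition \ref{Def top}. By Theorem \ref{Thm Rigidity}.$\mathbb{III}$.3 we get $\mathfrak{M}^{proj}_{X_{CY}\times S^{1},\pi^{\star}E,\phi}=\rho^{-1}([B])\cong CON(\Gamma_{B})\cong S^{1}$, and by $\mathbb{III}$.4, because $[B]$ is irreducible, $\mathfrak{M}^{proj,irred}_{X_{CY}\times S^{1},\pi^{\star}E,\phi}=\rho^{-1}(\{[B]\})\cong S^{1}\times\{[B]\}\cong S^{1}$. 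Both are homeomorphic to $S^{1}$, as claimed.

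It remains to construct such an $E$. By the Donaldson-Uhlenbeck-Yau correspondence (Definition \ref{Def DUY}) the HYM connections on $E$ are indexed by poly-stable holomorphic structures, so I must arrange: (a) $E$ admits a unique stable holomorphic structure $\mathbb{E}$, and (b) $E$ admits no properly reducible poly-stable structure $L_{1}\oplus L_{2}$. Point (b) is purely numerical: fixing an integral polarization and choosing $E$ with $\int_{X_{CY}}c_{1}(E)\wedge\omega\wedge\omega$ odd makes a splitting of equal slopes impossible, since it would force $\int c_{1}(L_{i})\wedge\omega^{2}=\tfrac{1}{2}\int c_{1}(E)\wedge\omega^{2}\notin\Z$. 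For (a) I exploit a fibration: projecting $X_{CY}$ to its first $CP^{1}$ factor gives $f:X_{CY}\to CP^{1}$ whose generic fibre is a smooth anti-canonical (bidegree $(2,3)$) hypersurface in $CP^{1}\times CP^{2}$, i.e.\ a K3 surface. Following R.~P.~Thomas's construction of rigid bundles on K3-fibred Calabi-Yau threefolds, I take $E$ of rank $2$ whose restriction to each smooth fibre is a stable bundle with Mukai vector $v$ satisfying $\langle v,v\rangle=-2$, hence fibrewise rigid and unique in its class.

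Verifying (a) then rests on two points. First, stability: by the standard principle that fibrewise stability implies global slope-stability for a K\"ahler class suitably adapted to $f$, stability of $\mathbb{E}|_{\mathrm{fibre}}$ upgrades to stability of $\mathbb{E}$ on $X_{CY}$; since stability depends only on $[\omega]$, I may then take Yau's Ricci-flat representative in that class. Second, rigidity and uniqueness: the relative $\mathrm{Ext}$-sheaves of $f$ satisfy $\mathcal{E}xt^{0}_{f}(\mathbb{E},\mathbb{E})\cong\O_{CP^{1}}$ and $\mathcal{E}xt^{1}_{f}(\mathbb{E},\mathbb{E})=0$ on the smooth locus, and feeding these into the Leray spectral sequence over $CP^{1}$ yields $\mathrm{Hom}_{X_{CY}}(\mathbb{E},\mathbb{E})=H^{0}(CP^{1},\O_{CP^{1}})=\C$ (so $\mathbb{E}$ is simple and the associated HYM connection is irreducible, i.e.\ $\Gamma_{B}=Center[U(2)]$) and $\mathrm{Ext}^{1}_{X_{CY}}(\mathbb{E},\mathbb{E})=H^{1}(CP^{1},\O_{CP^{1}})=0$ (so $\mathbb{E}$ is unobstructed and isolated, with $\mathrm{Ext}^{2}$ vanishing by Serre duality on the Calabi-Yau $X_{CY}$). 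An isolated, simple, stable point with vanishing tangent space is the unique stable structure on $E$, giving (a); with (b) this shows $\mathfrak{M}_{X_{CY},E,\omega-HYM}=\{[B]\}$ with $[B]$ irreducible.

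The main obstacle is the construction of the preceding paragraph together with its two verifications: producing an honestly locally free, $U(2)$-reducible bundle $E$ that is both fibrewise rigid and globally stable for a genuine K\"ahler class, and controlling the singular fibres of $f$ so that the relative $\mathrm{Ext}$-computation --- hence $\mathrm{Ext}^{1}_{X_{CY}}(\mathbb{E},\mathbb{E})=0$ --- is not spoiled by them. Once this is in hand everything else is formal: compatibility with the Calabi-Yau $SU(3)$-structure is automatic because slope-stability is a property of $[\omega]$ alone, and the identification of the two moduli with $S^{1}$ is exactly the bookkeeping of the first paragraph.
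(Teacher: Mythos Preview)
Your approach is essentially the paper's: reduce via Theorem \ref{Thm Rigidity}.$\mathbb{III}$ to the claim that $\mathfrak{M}_{X_{CY},E,\omega-HYM}$ is a single irreducible point, then invoke Thomas's example of a rigid stable bundle on a K3-fibred Calabi-Yau together with Donaldson--Uhlenbeck--Yau. The paper is more economical, citing \cite[Theorem 4.8 and p.~418, Example 1]{Thomas} directly for both the construction of $E$ and the fact that the moduli of stable structures is a single point with no strictly poly-stable structures, and then invoking Yau for the Ricci-flat representative in the given K\"ahler class; your sketch unpacks what Thomas actually does (the K3 fibration from the first $CP^{1}$ factor, a fibrewise rigid Mukai vector, and Leray for the relative $\mathrm{Ext}$ sheaves).

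One caution on your write-up: the inference ``$\mathrm{Ext}^{1}_{X_{CY}}(\mathbb{E},\mathbb{E})=0$ implies $\mathbb{E}$ is the \emph{unique} stable structure on $E$'' is not valid as stated. Rigidity rules out nearby deformations but not other connected components of the moduli space, so an isolated point with vanishing tangent space need not be the only stable structure on the underlying $C^{\infty}$ bundle. Thomas's argument supplies uniqueness by a more specific analysis (uniqueness on each K3 fibre forces any two stable structures to be fibrewise isomorphic, and then one compares over the base), and you rightly flag this entire construction as the main obstacle; but the sentence as written overstates what $\mathrm{Ext}^{1}=0$ alone buys.
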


 The above example might only be a drop in those which could  be produced by Theorem \ref{Thm Rigidity}. For instance, by understanding the full moduli of stable structures on Jardim's instanton bundles \cite{Jardim}, we can hope to determine topological types of moduli spaces  of  $G_{2}-$instantons on certain non-trivial bundles. Similar methods apply  on nearly-K\"ahler manifolds. For example, we can start from understanding the full moduli of  the canonical connection on the tangent bundle of $S^{6}$ (see \cite{Harland}). 

This note is organized as follows. Most of the definitions are in the introduction. In section \ref{section isotrivial}, we discuss the fundamental properties of iso-trivial connections. These hold  generally  and do not involve the instanton or Hermitian Yang-Mills condition.  We prove Theorem \ref{Thm Rigidity} and Corollary \ref{cor example Thomas} in section \ref{proof} and \ref{top}.  In the Appendix, we collect some technical ingredients which are more routine than those in the main body. \\

\textbf{Acknowledgement:} The author is grateful to Simon Donaldson for helpful discussions. This work is supported by Simons Collaboration on Special Holonomy in Geometry, Analysis, and Physics. The author thanks the anonymous referee for his/her suggestions. 
\section{Preliminary on iso-trivial connections \label{section isotrivial}}
Without involving the instanton or Hermitian Yang-Mills condition,   we  establish a theory for the iso-trivial connections and admissible broken gauges alone. 
\subsection*{Elementary facts related to the broken gauges} 
Let $u$, $\chi$, $A_{0}$ be a $t-$family of  automorphisms, endomorphisms, connections of $E\rightarrow Y$ respectively which are continuously differentiable in $t\in I$, $I$ is an open interval in $\mathbb{R}$.   Suppose $\frac{\partial u}{\partial t}=u\chi$, routine calculation shows that 
\begin{equation}\label{equ ddt of gauge action on a connection}\frac{\partial u(A_{0})}{\partial t}=d_{u(A_{0})}\chi+u^{-1}(\frac{\partial A_{0}}{\partial t})u,
\end{equation}
where we used the identity 
\begin{equation}\label{equ conjugate derivative} u^{-1}(d_{A_{0}}\chi) u= d_{u(A_{0})}(u^{-1}\chi u).
\end{equation}

We need the following classical existence and uniqueness for ordinary differential equations of endomorphisms. 
\begin{lem}\label{lem ODE}  Let $E\rightarrow Y$ be a Hermitian vector bundle (see Definition \ref{Def isotrivial}). Let $\chi_{i}$ ($i=1,2$) be smooth sections to  $\pi^{\star}EndE \rightarrow Y\times(a-\epsilon,b)$, $-\infty<a<b<+\infty$, $\infty>\epsilon>0$. Then for any smooth section $s_{0}$ of $EndE\rightarrow Y$, the initial value problem 
\begin{equation}\frac{ds}{dt}=\chi_{1} s+s\chi_{2},\ s(a)=s_{0}
\end{equation}
admits an unique smooth solution $s$  on $Y\times (a-\epsilon,b)$. Moreover, when $\chi_{i}$ are all $adE-$valued and $s_{0}$ is a (unitary) gauge on $Y$,  $s$ is a gauge on $Y\times(a-\epsilon,b)$. \end{lem}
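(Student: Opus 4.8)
The plan is to treat the equation fiberwise as a linear matrix ODE with parameters, invoke the classical theory of linear systems for existence, uniqueness, and smooth dependence, and reserve a short uniqueness argument for the unitarity statement.

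First I would observe that the equation $\frac{ds}{dt}=\chi_{1}s+s\chi_{2}$ makes sense globally and involves no derivatives along $Y$. Since $E$ is pulled back along $\pi$, a section of $\pi^{\star}End E$ is the same as a $t$-family of sections of $End E\to Y$, and $\frac{\partial}{\partial t}$ differentiates such a family entrywise in any $t$-independent local frame, landing again in $\pi^{\star}End E$. Covering $Y$ by coordinate charts $U$ equipped with pulled-back (hence $t$-independent) unitary local frames, the section $s$ and the coefficients $\chi_{i}$ become smooth $M_{m}(\C)$-valued functions on $U\times(a-\epsilon,b)$, and the equation becomes the linear system $\dot{s}=L(p,t)[s]$, where $L(p,t)[X]\triangleq\chi_{1}(p,t)X+X\chi_{2}(p,t)$ is a linear endomorphism of $M_{m}(\C)\cong\C^{m^{2}}$ depending smoothly on $(p,t)$.

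Next I would apply the classical existence, uniqueness, and smooth-dependence theory for linear ODEs. For each fixed $p$ the initial value problem has a unique solution; because the system is linear, Gr\"onwall's inequality yields the a priori bound $|s(p,t)|\le|s_{0}(p)|\exp\!\big(\int_{a}^{t}\|L(p,\tau)\|\,d\tau\big)$, so no finite-time blow-up occurs and the solution extends to the whole interval $(a-\epsilon,b)$ on which the coefficients are defined. Smooth dependence of solutions on parameters and initial data gives joint smoothness of $s$ in $(p,t)$ on each chart, and fiberwise uniqueness forces the chartwise solutions to agree on overlaps, so they patch to a single smooth section $s$ of $\pi^{\star}End E\to Y\times(a-\epsilon,b)$; this establishes existence and uniqueness.

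For the final claim, suppose the $\chi_{i}$ are $u(m)$-valued (so $\chi_{i}^{*}=-\chi_{i}$) and $s_{0}$ is $U(m)$-valued. Setting $P\triangleq s^{*}s$ and using $\dot{s}=\chi_{1}s+s\chi_{2}$ together with $\dot{s}^{*}=-s^{*}\chi_{1}-\chi_{2}s^{*}$, a direct computation gives $\frac{d}{dt}P=[P,\chi_{2}]$, which is again of the form $\dot{P}=\chi_{1}'P+P\chi_{2}'$ with $\chi_{1}'=-\chi_{2}$ and $\chi_{2}'=\chi_{2}$. Since $P(a)=s_{0}^{*}s_{0}=Id$ and the constant section $P\equiv Id$ solves this same initial value problem, the uniqueness just established forces $s^{*}s\equiv Id$; hence $s(p,t)\in U(m)$ everywhere. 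The lemma is essentially routine, and the only points requiring care are the promotion of local (short-time) solvability to existence on the full interval, which rests on linearity via Gr\"onwall, and the gluing of the chartwise smooth solutions, which rests on fiberwise uniqueness; the unitarity is then an immediate consequence of uniqueness rather than of any separate estimate.
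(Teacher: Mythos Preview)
Your proposal is correct and follows essentially the same approach as the paper, which in fact does not write out a proof but only cites standard existence/uniqueness results and the Gr\"onwall inequality. Your chartwise reduction, global extension via linearity and Gr\"onwall, and the unitarity argument via $\frac{d}{dt}(s^{*}s)=[s^{*}s,\chi_{2}]$ together with uniqueness are all sound and supply precisely the details the paper leaves implicit.
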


For the reader's interest, Lemma \ref{lem ODE} can be proved by  
the existence, uniqueness (see \cite[Theorem 3.1]{FB}),  and  Gronwall-inequality (see \cite[Page 12]{Burke}). 

We now turn to the criteria for the smoothly periodic extension of a smooth endomorphism of the pullback bundle on $Y\times [0,2\pi]$.
 \begin{clm}\label{clm periodic gauges}Under the setting of Definitions \ref{Def smooth periodicity} and \ref{Def Cinfty top},  suppose $s$ is a smooth section of $\pi^{\star}EndE\rightarrow Y\times [0,2\pi]$. Then $s$ extends to a  smooth (periodic) section on $\pi^{\star}End E\rightarrow Y\times S^{1}$ if and only if $\frac{\partial^{k}s}{\partial t^{k}}(0)=\frac{ \partial^{k}s}{\partial t^{k}}(2\pi)$ for any $k\geq 0$. 
 \end{clm}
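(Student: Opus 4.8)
The plan is to reduce everything to the elementary fact that a smooth object on $S^{1}=\R/2\pi\Z$ is the same as a smooth $2\pi$-periodic object on $\R$. Write $q:\R\to S^{1}$ for the quotient covering map and $\widetilde q=\mathrm{id}_{Y}\times q:Y\times\R\to Y\times S^{1}$. Since $q$ is a local diffeomorphism and smoothness is a local property, a section of $\pi^{\star}\mathrm{End}E\to Y\times S^{1}$ is smooth if and only if its pullback along $\widetilde q$ is a smooth ($2\pi$-periodic) section of the corresponding bundle on $Y\times\R$. Thus $\phi$ extends to a smooth periodic section on $Y\times S^{1}$ if and only if the $2\pi$-periodic extension $\widetilde\phi$ of $\phi$, obtained on $Y\times\R$ by reducing the $t$-variable modulo $2\pi$ (well defined because the $k=0$ hypothesis gives $\phi(\cdot,0)=\phi(\cdot,2\pi)$), is smooth on $Y\times\R$.

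For the ``only if'' direction I would argue directly: if such a smooth periodic $\widetilde\phi$ exists, then $\widetilde\phi(\cdot,t+2\pi)=\widetilde\phi(\cdot,t)$ as sections on $Y\times\R$, and differentiating $k$ times in $t$ and evaluating at $t=0$ gives $\partial_{t}^{k}\phi(\cdot,0)=\partial_{t}^{k}\phi(\cdot,2\pi)$ for every $k\ge0$. This is immediate.

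For the ``if'' direction, the extension $\widetilde\phi$ is manifestly $2\pi$-periodic and is smooth on $Y\times(\R\setminus2\pi\Z)$, since there it coincides locally with an integer translate of $\phi|_{Y\times(0,2\pi)}$. By periodicity it then suffices to verify smoothness along $Y\times\{0\}$. Working in a bundle trivialization over a coordinate chart $U\subseteq Y$, one writes $\widetilde\phi$ as a matrix-valued function of $(y,t)$ with $y$ in an open subset of $\R^{n}$ and $t$ near $0$; on $\{t\ge0\}$ it equals $\phi(y,t)$ and on $\{t\le0\}$ it equals $\phi(y,t+2\pi)$, each smooth up to the boundary $\{t=0\}$. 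Differentiating the hypothesis $\partial_{t}^{k}\phi(\cdot,0)=\partial_{t}^{k}\phi(\cdot,2\pi)$ along $U$ shows that all mixed one-sided partials $\partial_{y}^{\alpha}\partial_{t}^{k}$ of the two pieces agree on $\{t=0\}$.

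It then remains to invoke a gluing lemma: two functions that are smooth on the complementary closed half-spaces $\{t\ge0\}$ and $\{t\le0\}$ and whose partial derivatives of all orders agree along $\{t=0\}$ patch together to a smooth function. I would prove this by induction on the order of differentiation, using the elementary one-variable fact that a continuous function which is differentiable off a point and whose derivative has a common two-sided limit there is differentiable at that point with that value (a consequence of the mean value theorem); the $y$-derivatives cause no difficulty, being tangent to the gluing locus $\{t=0\}$. Alternatively one may apply Seeley's half-space extension theorem to extend each piece to a full neighborhood, note that the two extensions have identical infinite jets along $\{t=0\}$, and reduce to the same statement. I expect this gluing step---promoting the matching of all one-sided $t$-derivatives to genuine joint smoothness in $(y,t)$---to be the only real obstacle, and it is precisely where the hypothesis for \emph{every} $k\ge0$, combined with differentiation along $Y$, is used.
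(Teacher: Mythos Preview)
Your proposal is correct and follows exactly the strategy the paper indicates: the paper does not give a detailed proof but remarks that the ``only if'' is obvious and that the ``if'' amounts to checking the patching condition, which is precisely your gluing argument in local charts via matching of all one-sided mixed partials. Your write-up supplies the routine multivariable-calculus details the paper elects to omit.
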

The proof of Claim \ref{clm periodic gauges}, in view of Definition \ref{Def smooth periodicity}, is a routine (but interesting) exercise on multi-variable calculus.  We note that the ``only if" in Claim \ref{clm periodic gauges} is obvious. The point is to show the ``if" by the patching condition. 
 \begin{rmk}\label{rmk Y} When the underlying manifold is $Y\times S^{1}$, we add $Y$ as a subscript if the operation (gauge transformation, derivative etc) is on $Y$. For example, see \eqref{equ splitting of connection}, \eqref{equ splitting of curvature}, and \eqref{equ Spin7 instanton on the product} below. Hence in the setting of Definition \ref{Def ab gauges and iso trivial} (iso-trivial connections), for any gauge $u$ on $Y\times [0,2\pi]$, we have the following splitting on $Y\times [0,2\pi]$. 
 \begin{equation}\label{equ gauge action on product vs slice}u(B)=u_{Y}(B)+\chi_{u}dt. 
 \end{equation}
 \end{rmk}
 \subsection*{Gauge equivalence of iso-trivial connections}The following proposition determines whether two iso-trivial connections are gauge equivalent. The proof utilizes the above facts on endomorphisms. 
\begin{prop}\label{Prop gauge equivalence characterization}In the setting of Definition \ref{Def ab gauges and iso trivial}, on the product manifold $Y\times S^{1}$, two iso-trivial connections $u(B)$ and $v(\widetilde{B})$ are gauge equivalent if and only if there is a gauge $g$ on $Y$ with the following properties. 
\begin{enumerate}\item $g(B)=\widetilde{B}$,\
\item  $u(2\pi)g=gv(2\pi)$.
 \end{enumerate}
 Under the above two conditions, the gauge that transforms $u(B)$ to $v(\widetilde{B})$ is $s=u^{-1}gv$ i.e. $s[u(B)]=v(\widetilde{B})$.
\end{prop}
\begin{proof} We first show the ``only if''. On $Y\times (0,2\pi)$, $(us)(B)=v(\widetilde{B})$ means $g(B)=\widetilde{B}$ where $g \triangleq usv^{-1}$. Then the identity \eqref{equ gauge action on product vs slice} yields that $\frac{\partial g}{\partial t}=0$ i.e. $g$ is independent of  $t\in (0,2\pi)$.

Let $t\rightarrow 0$ in $g \triangleq usv^{-1}$, we find that  $g=s(0)$. Because $s(2\pi)=s(0)=g$, let  $t\rightarrow 2\pi$, we find that  $u(2\pi)g=gv(2\pi)$. 
 
 The proof of the ``if'' is simply by taking $s=u^{-1}gv$. We compute $\frac{\partial s}{\partial t}$:
 \begin{equation*}\frac{\partial s}{\partial t}=-\chi_{u}s+s\chi_{v}.
 \end{equation*}  Because both $\chi_{u}$ and $\chi_{v}$ are smoothly periodic, so is $\frac{\partial s}{\partial t}$.  By the periodicity condition  $s(2\pi)=s(0)$ and Claim  \ref{clm periodic gauges}, $s$ is smoothly periodic.
 \end{proof}
 

 \subsection*{Connecting the identity automorphism to an arbitrary element in $\Gamma_{B}$ via a $B-$admissible broken gauge}
 We  show that for any connection $B$ on $Y$, any element in the stabilizer group is the value of a $B-$admissible broken gauge at $t=2\pi$. This is crucial to showing that each ``fiber" is bijective via $\rho$ to $CON (\Gamma_{B})$, in relation to Theorem \ref{Thm Rigidity}.$\mathbb{I},\mathbb{II},\mathbb{III}.3$.
 \begin{lem} \label{lem there exists a gauge connecting id to arbitrary a}Still in the setting of Definition \ref{Def ab gauges and iso trivial},  for any connection $B$ on $E\rightarrow Y$, and any $a\in \Gamma_{B}$, there is a $B-$admissible broken  gauge $u$ on the pullback $\pi^{\star} {E}\rightarrow Y\times
[0,2\pi]$ such that $u(2\pi)=a$. 
 \end{lem}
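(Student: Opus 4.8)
The plan is to realize $u$ as a one-parameter subgroup with \emph{constant} logarithmic derivative, so that smooth periodicity of $\chi_u$ holds for free. Concretely, I would first produce a $B$-parallel skew-Hermitian section $\xi$ of $\mathrm{ad}(E)$ (that is, $d_B\xi=0$ and $\xi^*=-\xi$) with fiberwise exponential $\exp(\xi)=a$, and then define $u(t)=\exp\!\big(\tfrac{t}{2\pi}\xi\big)$ for $t\in[0,2\pi]$. With this choice $u(0)=\mathrm{Id}$ and $u(2\pi)=\exp(\xi)=a\in\Gamma_B$, while $\chi_u=u^{-1}\tfrac{du}{dt}=\tfrac{1}{2\pi}\xi$ is independent of $t$; since $\xi$ is a smooth section over $Y$, the constant $t$-family $\chi_u$ extends trivially across $Y\times S^1$ and is therefore smoothly periodic. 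As $\xi$ is skew-Hermitian, $u$ is $U(m)$-valued. Hence $u$ is $B$-admissible with $u(2\pi)=a$, and the final assertion follows at once from the definition of iso-triviality together with Remark \ref{rmk isotrivial connections are smooth}.

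The whole lemma thus reduces to one surjectivity statement: every $a\in\Gamma_B$ is the fiberwise exponential of some $B$-parallel skew-Hermitian section. To prove this I would identify the Lie-group structure of $\Gamma_B$. Consider the space $\mathcal{A}_B$ of $B$-parallel endomorphisms of $E$, i.e. sections $\eta$ of $\mathrm{End}(E)$ with $d_B\eta=0$. Since $B$ is unitary, $d_B$ commutes both with composition and with taking the Hermitian adjoint, so $\mathcal{A}_B$ is a finite-dimensional $*$-closed, identity-containing subalgebra of endomorphisms; concretely, after evaluating at a base point it is the commutant of the holonomy representation of $B$ on the fiber $\mathbb{C}^m$. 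Its group of unitary elements is precisely $\Gamma_B$, and its space of skew-Hermitian elements is precisely $\mathrm{Lie}(\Gamma_B)$, the $B$-parallel skew-Hermitian sections.

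The key structural input, and the main obstacle, is the connectedness of $\Gamma_B$. I would obtain it from complete reducibility of the holonomy representation: decomposing $\mathbb{C}^m$ into isotypic components and applying Schur's lemma gives $\mathcal{A}_B\cong\bigoplus_i M_{n_i}(\mathbb{C})$ and hence $\Gamma_B\cong\prod_i U(n_i)$, a compact \emph{connected} Lie group. For such a group the exponential map is surjective (equivalently, every unitary matrix is the exponential of a skew-Hermitian one by the spectral theorem), which yields the desired $\xi$. This connectedness is exactly where the $U(m)$-hypothesis is indispensable: the commutant of any subgroup of $U(m)$ is a product of unitary groups and so is connected, whereas for $SU(m)$ or $SO(m)$ the corresponding stabilizer can be disconnected (for instance the center $\mathbb{Z}/m\mathbb{Z}$ of $SU(m)$ in the irreducible case), and then an element outside the identity component is \emph{not} an exponential of a parallel section — which is precisely the phenomenon flagged in the cautionary remark preceding the lemma.
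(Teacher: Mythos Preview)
Your argument is correct and takes a genuinely different route from the paper. The paper never identifies $\Gamma_B$ as a product of unitary groups nor invokes surjectivity of the exponential; instead it builds an explicit path by affine interpolation in the complexified algebra, setting $\tau(t)=a+\gamma(t)(\mathrm{Id}-a)$ for a smooth curve $\gamma:[0,2\pi]\to\mathbb{C}$ from $1$ to $0$ that avoids the (finitely many, $Y$-independent) roots of $\det[a+x(\mathrm{Id}-a)]$, and then projects this $GL$-valued path to a unitary one via $P(N)=\sqrt{NN^{\star}}\,N^{\star,-1}$. Your approach is more conceptual: it isolates the structural reason the lemma holds for $U(m)$---the commutant of the holonomy is a product $\prod_i U(n_i)$, hence connected---and produces a cleaner gauge with \emph{constant} $\chi_u$, which also makes the $B$-admissibility verification trivial. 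The paper's approach, by contrast, is more hands-on and avoids any Lie-theoretic input, at the cost of two auxiliary technical facts (Claim~\ref{clm det is a constant} that $\det H$ is constant for $B$-parallel $H$, so the roots are globally well-defined, and Lemma~\ref{lem sqrt is real analytic} that $\sqrt{\cdot}$ is real-analytic, so $P$ is smooth). Both proofs ultimately exploit a form of connectedness---you use connectedness of $\Gamma_B$ directly, the paper uses path-connectedness of $\mathbb{C}$ minus finitely many points---and your reading of the $SU(m)/SO(m)$ obstruction as a failure of connectedness of the stabilizer is exactly right.
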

 \begin{proof} Step 1: For any $a\in \Gamma_{B}$, we first show that there is an automorphism  $\tau$ on $Y\times S^{1}$ such that $\tau(2\pi)=a$ and $\tau$ satisfies all requirements for  $B-$admissibility except being unitary.  \begin{clm} \label{clm there exists a good path}There exists a smooth curve $\gamma(t):\ [0,2\pi]\rightarrow \C$ such that the following holds
 \begin{itemize} 
 \item $\gamma(t)=1$ when $t\in [0,\frac{1}{10}]$.  $\gamma(t)=0$ when $t\in [-\frac{1}{10}+2\pi,2\pi]$. 
 \item $\tau\triangleq a+\gamma(t)(Id-a)$ is a section of $Aut(E)$ i.e. it is invertible for every $t\in [0,2\pi]$. 
 \end{itemize}
 \end{clm}
 
To prove Claim \ref{clm there exists a good path}, we note that at any $p\in Y$, $det[a+x(Id-a)]$ is a degree $m$ polynomial in $x$. As a section of $End(E)\rightarrow Y$, we find that 
\begin{equation}\label{equ 1 lem there exists a gauge connecting id to arbitrary a}d_{B} [a+x(Id-a)]=0.
\end{equation}
To show that $a+x(Id-a)$ is always invertible except for finitely-many $x$, we need the following. 
 \begin{clm}\label{clm det is a constant} $H\in C^{\infty}[Y,EndE]\ \textrm{and}\ d_{B}H=0\Longrightarrow\ det(H)\ \textrm{is a constant on}\ Y$. \end{clm}
To prove the claim, it suffices to show $det(H)$ is a constant on any smooth curve $l(t)$,\\ $t\in [0,t_{0}]$ connecting  two arbitrary  distinct points $p, q\in Y$. Parallel transport yields a $B-$parallel frame $S(t)=[s_{1}(t),...,s_{i}(t),...,s_{m}(t) ]$  along $l(t)$. For any tangent vector $X$ at a point $p$, let $\nabla_{B,X}$ denote the derivative at $p$ along $X$ with respect to the connection $B$. Let $h$ be the matrix of $H$ under $S(t)$ i.e. $HS=Sh$ on $l(t)$, then $d_{B}H=0$ implies that
$$0=\nabla_{B,\dot{l}(t)}HS= \nabla_{B,\dot{l}(t)}Sh=S\frac{\partial h}{\partial t}.$$ The above  means that the matrix $h$ is independent of $t$. Using that $det(H)=det (h)$ on $l(t)$, and that at any point,  $det(H)$ is independent of frame, the proof of Claim \ref{clm det is a constant} is complete. 

Applying Claim \ref{clm det is a constant} and  condition  \eqref{equ 1 lem there exists a gauge connecting id to arbitrary a} to $H=\tau\triangleq a+\gamma(t)(Id-a)$, the roots $x_{i}, i=1...m$ of the polynomial $det[a+x(Id-a)]=0$ (counted with multiplicities) must be constants on $Y$. The topological space $\C \setminus \cup_{i=1}^{m}x_{i}$ is path connected. Because $det[a+x(Id-a)]\neq 0$ when $x=1$ or $x=0$, there is a $\gamma(t)$ which not only  satisfies the first  desired condition in Claim  \ref{clm there exists a good path}, but also avoids the roots $\cup_{i=1}^{m}x_{i}$. Then the second  desired condition in Claim  \ref{clm there exists a good path} holds.

Step 2: we then improve $\tau$ to be  unitary. The following key ingredient holds by elementary proof.  Let $Herm_{m\times m}$ ($Herm_{m\times m}^{+}$) denote the set of all $m\times m$ (positive definite) Hermitian matrices.  
\begin{clm}\label{clm unique square root}For any $H\in Herm_{m\times m}^{+}$, there exists a unique $h\in Herm_{m\times m}^{+}$ such that $H=h^{2}$. We  denote $h$ by $\sqrt{H}$. 
\end{clm}

 Let $N\in GL(m,\C)$ be an invertible complex matrix. Using the square root above, we  define the linear operator ``projecting" an invertible matrix to a unitary one.
 \begin{equation}P(N)=(\sqrt{NN^{*}})\cdot N^{*,-1}.
 \end{equation}
It is routine to verify that 
\begin{eqnarray}& & P(N)\in U(m)\ \textrm{for any}\ N\in GL(m,\C).\ P(N)=N\ \textrm{if}\ N\in U(m).\label{eqn P preserves unitary}
\\& & P(g^{-1}N g)=g^{-1}P(N)g\ \textrm{if}\ g\in U(m).\label{eqn P commutes with conjugation}
\end{eqnarray}
Let $\tau$ be an automorphism on $Y\times [0,2\pi]$. On each coordinate chart $U\times [0,2\pi]$ of the pullback bundle on $Y$, under the pullback trivialization $\pi^{\star}s_{U}$, still let $\tau$ denote the matrix-valued function representing the automorphism $\tau$.   The transition condition \eqref{eqn P commutes with conjugation} says that the automorphism $u$ defined by $u(\pi^{\star}s_{U})\triangleq (\pi^{\star}s_{U}) P(\tau)$ is independent of the coordinate or trivialization chosen. Thus $u$ is a global unitary automorphism.    Moreover, 
\begin{itemize}
 \item $P$ is analytic in $N\in GL(m,\C)$ (see Lemma \ref{lem sqrt is real analytic} below).  Then  $u$ is smooth since $\tau$ is. 
\item 
$\chi_{\tau}\triangleq \tau^{-1}\frac{\partial \tau}{\partial t}=0$ when $t$ is close to $0$ or $2\pi$. By the fact \eqref{eqn P preserves unitary}, $u=\tau$ there. Then  $\chi_{u}=0$ when $t$ is close to $0$ or $2\pi$. Claim \ref{clm periodic gauges} thereupon says that $\chi_{u}$ is smoothly periodic. 
\end{itemize}
The above  precisely means that $u$ is $B-$admissible (see Definition \ref{Def ab gauges and iso trivial}). The proof of Lemma \ref{lem there exists a gauge connecting id to arbitrary a} is complete. 
 \end{proof}
 
 \subsection*{Irreducibility}
 In the following, we show that an iso-trivial connection with respect to  $B$ is reducible if and only if the connection $B$ is reducible. Hence, the same statement holds if we replace ``reducible" by ``irreducible". For Theorem \ref{Thm Rigidity}.$\mathbb{I},\mathbb{III}.4$, this is crucial in showing that the moduli of irreducible instantons on the $7-$manifold maps to the moduli of irreducible Hermitian Yang-Mills connections on the $6-$manifold. The same applies to  $\mathbb{II}.4$ therein as well.

 \begin{lem}\label{lem irred}  Given an isotrivial connection $u(B)$ on $E\rightarrow Y\times S^{1}$,  for any gauge $v$ on $Y\times S^{1}$, the following two conditions are equivalent. 
 \begin{enumerate}\item $d_{u(B)}v=0$.
\item There is an element $b\in \Gamma_{B}$ such that $bu(2\pi)=u(2\pi)b$ and $v=u^{-1}bu$. \end{enumerate}
Consequently, $u(B)$ is reducible on $Y\times S^{1}$ $\Longleftrightarrow$ $B$ is reducible on $Y$.
 \end{lem}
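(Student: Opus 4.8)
The plan is to split the covariant derivative $d_{u(B)}v$ into its $Y$-component and its $dt$-component using the decomposition \eqref{equ gauge action on product vs slice}, namely $u(B)=u_{Y}(B)+\chi_{u}\,dt$, and then to conjugate by $u$ so as to reduce every condition to a statement about the fixed connection $B$ on $Y$. Concretely, I set $w\triangleq uvu^{-1}$ (pulling $v$ back to $Y\times[0,2\pi]$ along the covering). A direct computation together with the gauge-covariance identity \eqref{equ conjugate derivative} shows that the $dt$-part of $d_{u(B)}v$ equals $u^{-1}(\partial_{t}w)u$ and that the $Y$-part equals $u^{-1}(d_{B}w)u$. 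Hence $d_{u(B)}v=0$ if and only if $\partial_{t}w=0$ and $d_{B}w=0$ for each $t$; that is, $w$ is a $t$-independent element $b\in\Gamma_{B}$ and $v=u^{-1}bu$. This already produces the shape of condition 2.

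To finish the equivalence I would account for periodicity. Since $v$ is a genuine gauge on $Y\times S^{1}$ it is smoothly periodic, so $v(0)=v(2\pi)$; using $u(0)=Id$ and $w=b$ this reads $b=u(2\pi)^{-1}b\,u(2\pi)$, i.e. $b\,u(2\pi)=u(2\pi)\,b$, which is exactly the commutation clause of condition 2. This gives $(1)\Rightarrow(2)$. Conversely, given $b\in\Gamma_{B}$ commuting with $u(2\pi)$, I would define $v\triangleq u^{-1}bu$ and verify admissibility as a gauge on $Y\times S^{1}$: it is $U(m)$-valued because $u$ and $b$ are, and differentiating gives $\partial_{t}v=[v,\chi_{u}]$, so by induction every $\partial_{t}^{k}v$ is a universal noncommutative polynomial in $v$ and the $t$-derivatives of $\chi_{u}$. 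Since $v(0)=v(2\pi)=b$ and $\chi_{u}$ is smoothly periodic, all these derivatives match at the endpoints, so Claim \ref{clm periodic gauges} yields that $v$ extends smoothly over $S^{1}$; reversing the splitting computation then gives $d_{u(B)}v=0$, establishing $(2)\Rightarrow(1)$.

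For the reducibility corollary, I would observe that the map $b\mapsto u^{-1}bu$ is an injective group homomorphism identifying $\Gamma_{u(B)}$ with the centralizer $Z_{\Gamma_{B}}(u(2\pi))=\{b\in\Gamma_{B}:b\,u(2\pi)=u(2\pi)\,b\}$, and that it fixes scalar matrices, hence carries $\mathrm{Center}[U(m)]\subseteq\Gamma_{B}$ onto $\mathrm{Center}[U(m)]\subseteq\Gamma_{u(B)}$. It then suffices to prove the elementary group fact: for a closed subgroup $G\leq U(m)$ containing the scalars $Z=\mathrm{Center}[U(m)]$ and any $g_{0}=u(2\pi)\in G$, one has $Z_{G}(g_{0})=Z$ if and only if $G=Z$. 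Indeed, if $g_{0}\notin Z$ then $g_{0}\in Z_{G}(g_{0})\setminus Z$, while if $g_{0}\in Z$ then $g_{0}$ is central in $U(m)$ and $Z_{G}(g_{0})=G$; in either case $G\supsetneq Z$ forces $Z_{G}(g_{0})\supsetneq Z$, and the converse inclusion is trivial. Applying this with $G=\Gamma_{B}$ gives $\Gamma_{u(B)}=\mathrm{Center}[U(m)]$ iff $\Gamma_{B}=\mathrm{Center}[U(m)]$, i.e. $u(B)$ is irreducible iff $B$ is, which is the stated consequence.

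The main obstacle I anticipate is not the algebra but the smoothness/periodicity bookkeeping in the $(2)\Rightarrow(1)$ direction: $u$ itself need not be periodic (only $\chi_{u}$ and the endpoint $u(2\pi)\in\Gamma_{B}$ are controlled), so the claim that $v=u^{-1}bu$ descends to $Y\times S^{1}$ genuinely requires the derivative recursion $\partial_{t}v=[v,\chi_{u}]$ together with Claim \ref{clm periodic gauges}, rather than any periodicity of $u$.
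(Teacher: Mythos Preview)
Your proposal is correct and follows essentially the same route as the paper: both split $d_{u(B)}v$ into its $Y$-part and $dt$-part, conjugate by $u$ to reduce to $d_{B}w=0$ and $\partial_{t}w=0$ for $w=uvu^{-1}$, and then handle periodicity in the $(2)\Rightarrow(1)$ direction via the recursion $\partial_{t}v=[v,\chi_{u}]$ together with Claim~\ref{clm periodic gauges}. The one cosmetic difference is your treatment of the reducibility corollary: you package it as the abstract group statement that for $Z\subseteq G\leq U(m)$ and $g_{0}\in G$ one has $Z_{G}(g_{0})=Z$ iff $G=Z$, whereas the paper runs the same dichotomy (whether $u(2\pi)$ is central or not) directly on $\Gamma_{B}$; the underlying logic is identical.
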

  \begin{proof} Routine computation shows 
\begin{equation} d_{u(B)}v=d_{Y,u(B)}v+(\frac{\partial v}{\partial t}+[\chi_{u},v])dt.
\end{equation}
\begin{equation}\textrm{Then}\ \label{equ 0 lem irred}d_{u(B)}v=0\Longleftrightarrow \left\{ \begin{array}{c} \frac{\partial v}{\partial t}+[\chi_{u},v]=0,\\ d_{Y,u(B)}v=0. \end{array}\right.
\end{equation}
The first identity on the right  implies 
\begin{equation}\label{equ 0.5 lem irred}\frac{\partial (uvu^{-1})}{\partial t}=0.\end{equation}
$\textrm{Let}\ b\triangleq v(0)$, assuming ``1" and using the second identity on the right hand side of \eqref{equ 0 lem irred}, we have $b\in \Gamma_{B}$. The vanishing \eqref{equ 0.5 lem irred} shows 
\begin{equation}\label{equ 0 lem parallel gauge}v=u^{-1}bu\ \textrm{for all}\ (p,t)\in X\times S^{1}.
\end{equation}

``$1\Longrightarrow 2$'' : Because $v(0)=v(2\pi)=b$, it follows from evaluating \eqref{equ 0 lem parallel gauge} at $t=2\pi$. 

``$2\Longrightarrow 1$'' : The conditions in $``2"$ imply that $v(0)=v(2\pi)=b$. This means that $v$ is periodic. Because $\chi_{u}$ is smoothly periodic, successively differentiating $v=u^{-1}bu$ in $t$  shows that for any $k\geq 1$, $\frac{\partial^{k}v}{\partial t^{k}}$ is also periodic. Thus $v$ satisfies the conditions in Claim \ref{clm periodic gauges}, which thereupon says that $v$ is smoothly periodic. Applying the identity  \eqref{equ conjugate derivative} to the easy equation $u^{-1}(d_{Y,B}b)u=0$, we  verify the 2  conditions on the right hand side of \eqref{equ 0 lem irred} which are equivalent to $d_{u(B)}v=0$. This means $``1"$ holds. 

For the last conclusion in Lemma \ref{lem irred}, we first prove ``$\Longrightarrow$''. Suppose $u(B)$ is reducible, then there is a point $(p,t)\in X\times S^{1}$ and a $v$  such that $d_{u(B)}v=0$ but  $v(p,t)\notin Center[U(m)]$. 
By $``2"$,  $b\notin Center[U(m)]$: if not, $v=b\in Center[U(m)]$ at $(p,t)$. This is a contradiction.  Then $B$ is reducible.

We then prove ``$\Longleftarrow$''. Suppose $B$ is reducible. 

 If $u(2\pi)\in Center[U(m)]$, let $b$ be an arbitrary element in the non-empty set\\   $\Gamma_{B}\setminus Center[U(m)]$.
Then $bu(2\pi)=u(2\pi)b$.  The implication ``$2\Longrightarrow 1$'' says that $v\triangleq u^{-1}b u$ satisfies $d_{u(B)}v=0$ and $v(0)\notin Center[U(m)]$. Hence $u(B)$ is reducible on $Y\times S^{1}$. 
 
  If $u(2\pi)\notin Center[U(m)]$,  let $b=u(2\pi)\in \Gamma_{B}\setminus Center[U(m)]$, then $bu(2\pi)=u(2\pi)b$ still holds. Let $v\triangleq u^{-1}bu$, we still get $v(0)\notin Center[U(m)]$ and $d_{u(B)}v=0$. Hence $u(B)$ is reducible on $Y\times S^{1}$. \end{proof}
\section{Chern-Simons functionals and  proof of Theorem \ref{Thm Rigidity} I1, I2, II1, II2, III1, III2 \label{proof}}
\subsection{Chern-Simons functional on an arbitrary closed manifold}
To prove iso-triviality in the main theorem, and to deal with the instanton equations (for example, see \eqref{equ condition 1 G2 instant on product} below), we need a version of the Chern-Simons functional. The monotonicity and invariance along a smooth gauge orbit of the functional will play a crucial role. 
   
\begin{Def}\label{Def CS}Let $E\rightarrow Y$ be a  Hermitian vector bundle (see Definition \ref{Def isotrivial}). Given a closed $(n-3)-$form $H$ on $Y$, and a smooth (reference) connection $A_{0}$ on $E$, let the independent variable $a$ be an $adE-$valued $1-$form on $Y$. We define the Chern-Simons functional $CS_{Y,H}$ as follows. 
\begin{equation}\label{equ def CS functional}CS_{Y,H}(a)=\int_{Y}Tr(a\wedge d_{A_{0}}a+\frac{2}{3}a\wedge a\wedge a+2a\wedge F_{A_{0}})\wedge H.
\end{equation}

\end{Def}
In conjunction with the convention in Remark \ref{rmk Y}, any smooth connection $A$ on the pullback $\pi^{\star}E\rightarrow Y\times S^{1}$ can be written as 
\begin{equation} \label{equ splitting of connection} A=A_{Y}+\chi dt,
\end{equation}
where $A_{Y}=A_{Y}(t)$ is a smooth connection on $\pi^{\star}E\rightarrow Y\times S^{1}$ without $dt-$component, and $\chi$ is a smooth section of $\pi^{\star}(adE)\rightarrow  Y\times S^{1}$. The $dt-$component $\chi dt$ is well defined globally because the bundle is a pullback from $Y$.  The transition function is independent of $t$,  thus a local $dt-$component does not depend on the coordinate neighborhood chosen. Resultantly,  the difference $A_{Y}=A-\chi dt$ is also a globally well defined connection. In particular, both $A_{Y}$ and $\chi$ are smoothly periodic.  

 In view of the splitting of connection in \eqref{equ splitting of connection}, the curvature of $A$ on $Y\times S^{1}$ splits as
\begin{equation} \label{equ splitting of curvature}F_{A}=F_{Y, A_{Y}}+(d_{Y,A_{Y}} \chi-\frac{\partial A_{Y}}{\partial t})\wedge dt.\end{equation}

In order to produce an admissible broken gauge from an instanton, we need the following. 
\begin{lem} \label{lem ddt of gauge action on a connection}  In the setting of  Definitions \ref{Def isotrivial} and \ref{Def CS}, let $E\rightarrow Y$ be a Hermitian vector bundle and  suppose $A_{Y}$ is  a smooth  connection on $\pi^{\star}E\rightarrow Y\times [0,2\pi]$ without $dt-$component.

 $\mathbb{I}:$ Suppose $\frac{\partial A_{Y}}{\partial t}=b+d_{A_{Y}}\chi$ for  two arbitrary smooth sections $b$ and $\chi$ to $\pi^{\star}EndE\rightarrow Y\times S^{1}$. Let $s$ be the solution to the following equation produced by Lemma \ref{lem ODE}. 
 \begin{equation}\label{equ0 Lemma ddt of gauge action on a connection}\frac{\partial s}{\partial t}=-\chi s,\ s(0)=Id,\ t\in [0,2\pi).\end{equation} Then  
 \begin{equation}\label{equ Lemma ddt of gauge action on a connection}\frac{\partial s_{Y}(A_{Y})}{\partial t}=s^{-1}bs.
 \end{equation}

 $\mathbb{II}:$ Suppose further that $A_{Y}$ is smoothly periodic. The following conditions are equivalent.
 \begin{enumerate}
\item $\frac{\partial A_{Y}}{\partial t}=d_{A_{Y}}\chi$ for a  section $\chi$ of $\pi^{\star}adE\rightarrow Y\times S^{1}$.
\item There exists a smooth gauge $u$ on $\pi^{\star}E\rightarrow Y\times [0,2\pi]$ such that $A_{Y}=u_{Y}[A_{Y}(0)]$, $u(0)=Id$,  $u(2\pi)\in \Gamma_{A_{Y}(0)}$, and $u^{-1}\frac{\partial u}{\partial t}$ is smoothly periodic. 
 \end{enumerate}
 Moreover, the correspondence is given by $\chi=u^{-1}\frac{\partial u}{\partial t}$. 
 \end{lem}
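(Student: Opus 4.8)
The plan is to prove $\mathbb{I}$ by a direct computation and then obtain both implications of $\mathbb{II}$ from $\mathbb{I}$ together with Lemma~\ref{lem ODE}, so that no separate uniqueness theorem for a connection-valued ODE is required.

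For $\mathbb{I}$, I would rewrite the defining equation $\partial_t s=-\chi s$ in the right-multiplication form of \eqref{equ ddt of gauge action on a connection}. Since $-\chi s=s\,(-s^{-1}\chi s)$, the section $\eta\triangleq -s^{-1}\chi s$ obeys $\partial_t s=s\eta$, so applying \eqref{equ ddt of gauge action on a connection} on $Y$ with reference connection $A_Y$ (whose $t$-derivative is $b+d_{A_Y}\chi$) gives
\begin{equation*}
\frac{\partial s_Y(A_Y)}{\partial t}=d_{s_Y(A_Y)}\eta+s^{-1}\big(b+d_{A_Y}\chi\big)s.
\end{equation*}
The conjugation identity \eqref{equ conjugate derivative} rewrites $s^{-1}(d_{A_Y}\chi)s=d_{s_Y(A_Y)}(s^{-1}\chi s)=-d_{s_Y(A_Y)}\eta$, so the two $d_{s_Y(A_Y)}$-terms cancel and only $s^{-1}bs$ remains, which is \eqref{equ Lemma ddt of gauge action on a connection}. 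The only thing to watch here is consistent bookkeeping of left- versus right-multiplication and the precise form of \eqref{equ conjugate derivative}; once $\eta$ is introduced the cancellation is forced.

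For $\mathbb{II}$ the direction $2\Rightarrow 1$ is immediate: set $\chi\triangleq u^{-1}\partial_t u$, which is $u(m)$-valued because $u$ is unitary and is smoothly periodic by hypothesis; then $\partial_t u=u\chi$ and, since $A_Y(0)$ is $t$-independent, \eqref{equ ddt of gauge action on a connection} applied to $u_Y[A_Y(0)]=A_Y$ gives $\partial_t A_Y=d_{A_Y}\chi$ directly. For $1\Rightarrow 2$ I would feed the given $u(m)$-valued $\chi$ into Lemma~\ref{lem ODE} to obtain the unique smooth solution $s$ of $\partial_t s=-\chi s$, $s(0)=Id$, which is $U(m)$-valued since $\chi$ is $u(m)$-valued. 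Applying $\mathbb{I}$ with $b=0$ yields $\partial_t\,s_Y(A_Y)=0$, so $s_Y(A_Y)$ is constant in $t$ and equals its $t=0$ value $A_Y(0)$. Setting $u\triangleq s^{-1}$ and inverting the gauge action with the right-action rule \eqref{equ right multiplication of gauges} turns $s_Y(A_Y)=A_Y(0)$ into $A_Y=u_Y[A_Y(0)]$, with $u(0)=Id$ and $u^{-1}\partial_t u=\chi$ (so the asserted correspondence holds and $\chi$, being the prescribed section on $Y\times S^1$, is smoothly periodic).

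It remains only to check the boundary condition at $2\pi$: smooth periodicity of $A_Y$ gives $A_Y(2\pi)=A_Y(0)$, and evaluating $A_Y=u_Y[A_Y(0)]$ at $t=2\pi$ yields $u(2\pi)_Y[A_Y(0)]=A_Y(0)$, i.e.\ $d_{A_Y(0)}u(2\pi)=0$, which is exactly $u(2\pi)\in\Gamma_{A_Y(0)}$. Thus $u$ meets every requirement of condition~$2$. I expect the main obstacle to be notational rather than analytic: one must carefully distinguish operations on $Y$ from those on $Y\times S^1$ (Remark~\ref{rmk Y}), invert the gauge action via \eqref{equ right multiplication of gauges} instead of a naive left inverse, and track that passing from $s$ to $u=s^{-1}$ sends $-\chi$ to $+\chi$. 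Deducing $1\Rightarrow 2$ from $\mathbb{I}$, rather than re-solving a connection-valued equation and invoking a fresh uniqueness argument, is what keeps the step clean.
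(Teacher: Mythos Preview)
Your proposal is correct and follows essentially the same route as the paper: part~$\mathbb{I}$ is the routine computation the paper alludes to (you make the cancellation explicit via $\eta=-s^{-1}\chi s$ and \eqref{equ conjugate derivative}), and for $\mathbb{II}$ both directions match the paper's argument---solving the ODE for $s$, setting $u=s^{-1}$, and reading off $u^{-1}\partial_t u=\chi$ as in \eqref{emu 1 Lemma ddt of gauge action on a connection}. Your added verification that $u(2\pi)\in\Gamma_{A_Y(0)}$ via $A_Y(2\pi)=A_Y(0)$ is exactly what the paper means by ``Because $A_Y$ is smoothly periodic, we have that $u(2\pi)\in\Gamma_{A_Y(0)}$.''
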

 \begin{proof} Via routine calculation, $\mathbb{I}.\eqref{equ Lemma ddt of gauge action on a connection}$ is a direct corollary of the identity \eqref{equ ddt of gauge action on a connection} on the derivative in $t$. For $\mathbb{II}$, we first  show that $1\Longrightarrow 2$.  Let $b=0$ in \eqref{equ Lemma ddt of gauge action on a connection}, we find \begin{equation}\frac{\partial s_{Y}(A_{Y})}{\partial t}=0\ i.e. \ s_{Y}(A_{Y}) \ \textrm{is independent of}\ t. 
 \end{equation} Then $A_{Y}=s_{Y}^{-1}[A_{Y}(0)]$. Let $u\triangleq s^{-1}$, by \eqref{equ0 Lemma ddt of gauge action on a connection}, $u(0)=Id$.  Because $A_{Y}$ is smoothly periodic, we have that $u(2\pi)\in \Gamma_{A_{Y}(0)}$. Moreover, we  compute via \eqref{equ0 Lemma ddt of gauge action on a connection} that
 \begin{equation}\label{emu 1 Lemma ddt of gauge action on a connection}u^{-1}\frac{\partial u}{\partial t}=-\frac{\partial s}{\partial t}s^{-1}=\chi.
 \end{equation}
 
The implication  ``$2\Longrightarrow 1$'' directly follows from \eqref{equ ddt of gauge action on a connection}.  
  \end{proof}
\subsection*{Variation of the Chern-Simons functional}
The formula for the gradient of the Chern-Simons functional is provided by the following. 
\begin{lem}\label{first variation of CS} In the setting of Definitions \ref{Def isotrivial} and \ref{Def CS}, suppose $H$ is closed, and let $A_{0}$ be an smooth connection on a Hermitian vector bundle $E\rightarrow Y$. The variation of the Chern-Simons functional \eqref{equ def CS functional} is given by the following. Suppose $a$ is a $C^{2}$  $\pi^{\star}adE-$valued $1-$form on $Y\times (-\epsilon,\epsilon)$, $\epsilon>0$, and $\frac{\partial a}{\partial t}|_{t=0}=v$. Then 
\begin{eqnarray}& &  \frac{d CS_{Y,H}(a)}{d t}|_{t=0}= 2\int_{Y}Tr(v\wedge F_{A_{0}+a}\wedge H)\label{equ 0 lem first variation of CS}
\\& = &\label{equ 1 lem first variation of CS}  \left\{ \begin{array}{cc} -2\int_{Y} \langle v,\star (F_{A_{0}+a}\wedge H)\rangle  dvol_{Y} & \textrm{when}\ dimY\ \textrm{is odd},  \\ 
2\int_{Y} \langle v,\star (F_{A_{0}+a}\wedge H)\rangle dvol_{Y} & \textrm{when}\ dimY\ \textrm{is even}.
\end{array}\right. 
\end{eqnarray}
\end{lem}
\begin{proof} This is absolutely standard. Since the integral formula \eqref{equ 1 lem first variation of CS} is a more than direct corollary of \eqref{equ 0 lem first variation of CS}, we only prove  \eqref{equ 0 lem first variation of CS}. We calculate 
\begin{eqnarray}\nonumber& &\frac{d}{dt}|_{t=0} Tr(a\wedge d_{A_{0}}a+\frac{2}{3}a\wedge a\wedge a+2a\wedge F_{A_{0}})
\\&= &Tr( v\wedge d_{A_{0}}a+ (d_{A_{0}}a)\wedge v-d_{A_{0}}(a\wedge v)+\frac{2}{3}[v\wedge a\wedge a+a\wedge v\wedge a+a\wedge a\wedge  v]\nonumber
\\& & +2v\wedge F_{A_{0}}). \nonumber
\\&= &Tr( 2v\wedge d_{A_{0}}a+2v\wedge a\wedge a+2v\wedge F_{A_{0}})-dTr(a\wedge v). \nonumber
\\&= &Tr( 2v\wedge F_{A_{0}+a})-dTr(a\wedge v).  \label{equ 1 Lemma first variation of CS}
\end{eqnarray}
Because $H$ is closed, the proof of \eqref{equ 0 lem first variation of CS} is complete by plugging \eqref{equ 1 Lemma first variation of CS} in the following 
\begin{equation}\frac{d}{dt}|_{t=0}  CS_{Y,H}=\int_{Y}\frac{d}{dt}|_{t=0}  \{Tr(a\wedge d_{A_{0}}a+\frac{2}{3}a\wedge a\wedge a+2a\wedge F_{A_{0}})\}\wedge H.
\end{equation}
\end{proof}

Using the variation formula \eqref{equ 0 lem first variation of CS}, in the following result, we have the invariance of the Chern-Simons functional along a smooth gauge orbit. This is crucial for the monotonicity of the functional along a gauge-modified ``gradient flow" like the instanton equation \eqref{equ condition 1 G2 instant on product}  below. 
\begin{lem}\label{lem CS constant along gauge orbit}(see \cite{Kim}) In the setting of Definitions \ref{Def isotrivial} and \ref{Def CS}, suppose $H$ is closed, and let $A$ be a connection on a Hermitian vector bundle $E\rightarrow Y$. For any smooth  gauge $s$ on $\pi^{\star}E\rightarrow Y\times I$, where $I$ is a bounded open  interval in $t$,  $\frac{d}{dt}CS_{Y,H}[s_{Y}(A)]=0$ in $I$. Consequently, $CS_{Y,H}$ is  constant along any smooth one-parameter gauge orbit.
\end{lem}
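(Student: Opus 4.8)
The plan is to parametrize the gauge orbit by $t$, differentiate, and reduce everything to the first-variation formula of Lemma~\ref{first variation of CS} combined with the Bianchi identity and Stokes' theorem. Write $A(t)\triangleq s_Y(A)$ for the $t$-family of connections on $Y$ obtained from the slice-wise gauge action. Since $A$ itself is independent of $t$, formula \eqref{equ ddt of gauge action on a connection} (with the $t$-independent reference in the role of $A_0$, so that its $\partial_t$ term drops) shows at once that the velocity of the orbit is an infinitesimal gauge transformation:
$$\frac{\partial A(t)}{\partial t}=d_{A(t)}\chi,\qquad \chi\triangleq s^{-1}\frac{\partial s}{\partial t}.$$

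Next I would feed this velocity into Lemma~\ref{first variation of CS}. Writing $A(t)=A_{0}+a(t)$ with $A_{0}$ the fixed reference connection, the chain rule together with the first-variation formula in the direction $v=\partial_{t}A(t)=d_{A(t)}\chi$ gives, using $F_{A_{0}+a(t)}=F_{A(t)}$,
$$\frac{d}{dt}CS_{Y,H}[A(t)]=2\int_{Y}Tr\big(d_{A(t)}\chi\wedge F_{A(t)}\wedge H\big).$$
Everything now hinges on showing this integral vanishes.

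The heart of the matter is a single integration by parts. Consider the $(n-1)$-form $Tr(\chi\,F_{A(t)}\wedge H)$. Because $Tr$ annihilates the bracket term, ordinary $d$ agrees with $d_{A(t)}$ under the trace, so
$$d\,Tr(\chi\,F_{A(t)}\wedge H)=Tr\big(d_{A(t)}(\chi F_{A(t)})\wedge H\big)+Tr\big(\chi F_{A(t)}\wedge dH\big).$$
The final term drops out since $H$ is closed, while Leibniz together with the Bianchi identity $d_{A(t)}F_{A(t)}=0$ collapses $d_{A(t)}(\chi F_{A(t)})$ to $d_{A(t)}\chi\wedge F_{A(t)}$. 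Thus the integrand in the displayed derivative is exact, and Stokes' theorem on the closed manifold $Y$ forces $\frac{d}{dt}CS_{Y,H}[A(t)]=0$. Integrating over $I$ yields the constancy of $CS_{Y,H}$ along the orbit.

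The only step demanding genuine care is the sign bookkeeping in the Leibniz expansion of $d_{A(t)}(\chi F_{A(t)}\wedge H)$, where one must track the even degree of $F_{A(t)}$ and the scalar (hence $adE$-trivial) nature of $H$; apart from this routine verification I expect no real obstacle, as the argument is simply the assembly of Lemma~\ref{first variation of CS}, the Bianchi identity, $dH=0$, and Stokes.
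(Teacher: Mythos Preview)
Your argument is correct and is essentially the same as the paper's: both differentiate along the gauge orbit, invoke the first-variation formula of Lemma~\ref{first variation of CS}, and then kill the resulting integral using the Bianchi identity, $dH=0$, and Stokes' theorem on the closed manifold $Y$. The only cosmetic difference is that the paper writes the velocity as $s^{-1}(d_{A}\chi)s$ (with $\chi=\dot{s}s^{-1}$) and uses conjugation-invariance of the inner product to pass back to the fixed connection $A$ before integrating by parts, whereas you write it as $d_{A(t)}\chi$ (with $\chi=s^{-1}\dot{s}$) and integrate by parts directly with respect to $A(t)$; these are the same computation in two frames.
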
 
\begin{proof} Let $\chi\triangleq \frac{ds}{dt}s^{-1}$, because $A$ is independent of $t$,  the identity \eqref{equ ddt of gauge action on a connection} yields\\ $\frac{\partial}{\partial t}[s_{Y}(A)]=s^{-1}(d_{Y,A}\chi)s$. 
Because conjugation by a unitary gauge preserves the inner-product, by the variation formula \eqref{equ 1 lem first variation of CS}, we calculate \begin{eqnarray}&& \frac{d}{dt}CS_{Y,H}[s_{Y}(A)]=(-1)^{n}2\int_{Y}\langle s^{-1}(d_{Y,A}\chi)s,\star(F_{s_{Y}(A)}\wedge H)\rangle dvol
\\& =& (-1)^{n}2\int_{Y}\langle d_{Y,A}\chi,\star(F_{A}\wedge H)\rangle =2\int_{Y}\langle \chi,\star d_{Y,A}(F_{A}\wedge H)\rangle dvol\nonumber
\\&=&0. \nonumber 
\end{eqnarray}\end{proof}

\subsection*{Proof of the first part of the main result}
Next we use the routine results established so far to prove Theorem \ref{Thm Rigidity}. We first calculate the $G_{2}$ and $Spin(7)-$instanton equations with respect to the splitting \eqref{equ splitting of curvature}. 

$G_{2}-$case: In the setting of Theorem \ref{Thm Rigidity}.$\mathbb{I}$, let $Y=X$ [the manifold with a $SU(3)-$structure]. The splitting \eqref{equ splitting of connection} reads $A=A_{X}+\chi dt$. Via  the splitting \eqref{equ splitting of curvature}, the $G_{2}-$instanton equation \eqref{equ def G2 instanton} is equivalent to 
\begin{equation}\label{equ 0 G2 case} F_{X,A_{X}}\lrcorner Re\Omega+J(\frac{\partial A_{X}}{\partial t}-d_{X,A_{X}}\chi)+(F_{X,A_{X}}\lrcorner_{\omega} \omega)dt=0, 
\end{equation}
where $J(\eta)\triangleq \eta\lrcorner_{\omega} \omega$ for an arbitrary $1-$form $\eta$. $J$ is the complex structure on $1-$forms, therefore $J^{2}=-Id$. Then 
\begin{equation}\label{equ G2 instanton on the product with J} F_{X,A_{X}}\lrcorner_{\omega} Re\Omega+J(\frac{\partial A_{X}}{\partial t}-d_{X,A_{X}}\chi)=0,\ F_{X,A_{X}}\lrcorner_{\omega} \omega=0.
\end{equation}
Applying $J$ to both sides of the first equation in \eqref{equ G2 instanton on the product with J}, using that $$J(F_{X,A_{X}}\lrcorner_{\omega} Re\Omega)=F_{X,A_{X}}\lrcorner_{\omega} Im\Omega, $$ we find 
\begin{equation}
F_{X,A_{X}}\lrcorner_{\omega} Im\Omega-(\frac{\partial A_{X}}{\partial t}-d_{X,A_{X}}\chi)=0. 
\end{equation}
Using $\star Re\Omega=Im\Omega$, we find $F_{X,A_{X}}\lrcorner_{\omega} Im\Omega=\star_{X}(F_{X,A_{X}}\wedge Re\Omega)$. Hence \eqref{equ G2 instanton on the product with J} [therefore the original instanton equation \eqref{equ def G2 instanton}] is equivalent to 
\begin{eqnarray}& &\label{equ condition 1 G2 instant on product}\frac{\partial A_{X}}{\partial t}=\star_{X}(F_{X,A_{X}}\wedge Re\Omega)+d_{X,A_{X}}\chi 
\\& &\label{equ condition 2 G2 instant on product} F_{X,A_{X}}\lrcorner_{\omega} \omega=0.
\end{eqnarray}

$Spin(7)-$case. In the setting of Theorem \ref{Thm Rigidity}.$\mathbb{II}$,  on the $8-$dimensional manifold $M\times S^{1}$, we still write the connection as $A=A_{M}+\chi dt$ [in view of \eqref{equ splitting of connection}]. Then we still have 
\begin{equation} F_{A}=F_{M,A_{M}}+(d_{M,A_{M}}\chi-\frac{\partial A_{M}}{\partial t})\wedge dt. 
\end{equation}
We recall that the orientation is  $dt\wedge \phi\wedge \psi$.

Purely algebraically, given a $2-$form $F$ on $\R^{8}=\R\times \R^{7}$,  we write $F=F_{ \R^{7}}+F_{0}\wedge e^{0}$, where $e^{0}$ stands for the coordinate vector of the $\R$ in the Cartesian product. Under the orientation $dt\wedge \phi_{Euc}\wedge \psi_{Euc}$, the algebraic equation $\star_{8}(F\wedge \Psi_{Euc})+F=0$ is equivalent to the following equations on $\R^{7}$. 
\begin{eqnarray}& &\star_{7}(F_{ \R^{7}}\wedge \psi_{Euc})=F_{0},\label{equ condition 1 Spin7 instant on product}
\\& & \star_{7}(F_{ \R^{7}}\wedge \phi_{Euc})+F_{ \R^{7}}=\star_{7}(\psi_{Euc}\wedge F_{0}).\label{equ condition 2 Spin7 instant on product}
\end{eqnarray}
Using the algebraic identity $(\theta\lrcorner \phi_{Euc})\lrcorner \phi_{Euc}=\star_{7}(\theta\wedge \phi_{Euc})+\theta$ for any $\theta\in \Lambda^{2}\R^{7}$, and contracting both hand sides of \eqref{equ condition 1 Spin7 instant on product} with $\phi_{Euc}$, we find that \eqref{equ condition 1 Spin7 instant on product} implies \eqref{equ condition 2 Spin7 instant on product}. This means  \eqref{equ condition 2 Spin7 instant on product} is  redundant.

Hence, on the manifold $M\times S^{1}$, the $Spin(7)-$instanton equation \eqref{equ def spin7 instanton equation} is equivalent to the following equation on $M$. 
\begin{equation}\label{equ Spin7 instanton on the product}\star_{\phi}(F_{M,A_{M}}\wedge \psi)=d_{M,A_{M}}\chi-\frac{\partial A_{M}}{\partial t}.
\end{equation}

\begin{proof}[\textbf{Proof of Theorem} \ref{Thm Rigidity} $\mathbb{I}1, \mathbb{I}2, \mathbb{II}1, \mathbb{II}2, \mathbb{III}1, \mathbb{III}2$:] We only fully prove the first 2 statements in  $\mathbb{I}$. The proof for (the first 2 statements in each of) $\mathbb{II},\ \mathbb{III}$ are the same.

The observation is that the instanton equation \eqref{equ condition 2 G2 instant on product}  can be considered as a  gauge-modified ``gradient flow" of the Chern-Simons functional with respect to $Re\Omega$. Then monotonicity of the functional forces the curvature term in \eqref{equ condition 2 G2 instant on product} to vanish. The half-closed condition for the $SU(3)-$structure (i.e. $Re\Omega$ is closed) corresponds to the closeness assumption on $H$ in Lemma \ref{first variation of CS}, \ref{lem CS constant along gauge orbit}.

A $G_{2}-$instanton  $A$  on $X\times S^{1}$  satisfies the system \eqref{equ condition 1 G2 instant on product}, \eqref{equ condition 2 G2 instant on product}. With respect to\\ $b\triangleq\star_{X}(F_{X,A_{X}}\wedge Re\Omega)$ and the $\chi$ in \eqref{equ condition 1 G2 instant on product},  let $s$ be the gauge on $X\times S^{1}$ produced by \eqref{equ0 Lemma ddt of gauge action on a connection} in Lemma \ref{lem ddt of gauge action on a connection}.$\mathbb{I} $. 
Identity \eqref{equ Lemma ddt of gauge action on a connection} says
 \begin{equation}\frac{\partial s_{X}(A_{X})}{\partial t}=\star_{X}(F_{s_{X}(A_{X})}\wedge Re\Omega). 
 \end{equation}

Hence the variation formula \eqref{equ 1 lem first variation of CS} yields the derivative of the Chern-Simons functional in $t$:
\begin{equation}\label{equ -1 proof of Thm Rigidity} \frac{dCS_{X,Re\Omega}[s_{X}(A_{X})]}{dt}=2\int_{X} |F_{s_{X}(A_{X})}\wedge Re\Omega|^{2}  dvol_{X}.
\end{equation}
We recall (from below the splitting  \eqref{equ splitting of connection}) the trivial fact that $A_{X}$ is smoothly periodic. We also observe that $s$ is a smooth gauge on $Y\times (-1,2\pi+1)$: because $\chi$ is smoothly periodic in $t$, the gauge $s$ produced by the ODE in Lemma \ref{lem ddt of gauge action on a connection}.$\mathbb{I}$ actually exists smoothly for all $t\in (-\infty,+\infty)$.  Via the invariance of Chern-Simons functional in Lemma \ref{lem CS constant along gauge orbit}, we obtain
\begin{eqnarray} & &CS_{X,Re\Omega}[s_{X}(A_{X})(0)]=CS_{X,Re\Omega}[A_{X}(0)]=CS_{X,Re\Omega}[A_{X}(2\pi)]\nonumber
\\&=&CS_{X,Re\Omega}[s_{X}(A_{X})(2\pi)]\label{equ 0 proof of Thm Rigidity}\ [\textrm{because}\ s(0)=Id],
\end{eqnarray}
where the invariance of the Chern-Simons functional in Lemma \ref{lem CS constant along gauge orbit} is only used for the last  among the $3$ equalities above. 
Integrating  \eqref{equ -1 proof of Thm Rigidity} over $t\in [0,2\pi]$, using \eqref{equ 0 proof of Thm Rigidity}, we find
\begin{eqnarray*}& &\label{equ 1 proof of Thm Rigidity}2\int_{0}^{2\pi}\int_{X} |F_{s_{X}(A_{X})}\wedge Re\Omega|^{2}  dvol_{X}dt
=CS_{X,Re\Omega}[s_{X}(A_{X})(2\pi)]-CS_{X,Re\Omega}[s_{X}(A_{X})(0)]
\\&=&0
\end{eqnarray*}
Therefore $F_{s_{X}(A_{X})}\wedge Re\Omega=0$ everywhere, which in turn implies that 
\begin{equation}\label{equ 2 proof of Thm Rigidity}F_{X,A_{X}}\wedge Re\Omega=0\ \textrm{over}\ X\times \{t\}\ \textrm{for any}\ t\in S^{1}.
\end{equation}
The condition \eqref{equ condition 2 G2 instant on product} and \eqref{equ 2 proof of Thm Rigidity} imply that $A_{X}(t)$ is Hermitian Yang-Mills with $0-$slope for all $t\in S^{1}$. In particular, the connection $A_{X}(0)$ on $Y$ is Hermitian Yang-Mills. This means that a $G_{2}-$instanton on $\pi^{\star}E\rightarrow X\times S^{1}$ yields a Hermitian Yang-Mills connection with $0-$slope on $E\rightarrow X$. On the other hand, the pullback of a Hermitian Yang-Mills with $0-$slope on $E\rightarrow X$ is a $G_{2}-$instanton on $X\times S^{1}$.  The proof of  Theorem \ref{Thm Rigidity}.$\mathbb{I}$.1 is complete.

 Next, we prove Theorem \ref{Thm Rigidity}.$\mathbb{I}$.2. Plugging the vanishing \eqref{equ 2 proof of Thm Rigidity} back into the \eqref{equ condition 1 G2 instant on product} for $A_{X}(t)$, we find 
\begin{equation}
\frac{\partial A_{X}}{\partial t}=d_{X,A_{X}}\chi. 
\end{equation}
Lemma \ref{lem ddt of gauge action on a connection}.$\mathbb{II}$ produces a $A_{X}(0)-$admissible broken gauge, and implies that $A=u[A_{X}(0)]$ is iso-trivial.  The proof of the ``only if'' in Theorem \ref{Thm Rigidity}.$\mathbb{I}$.2  is complete.

Given a Hermitian Yang-Mills connection $B$ with $0-$slope, for any $B-$admissible broken gauge $u$, $u(B)$ is a smooth connection on $X\times S^{1}$ (see Remark \ref{rmk isotrivial connections are smooth}). Because of the gauge invariance of the Hermitian Yang-Mills with $0-$slope,  $u(B)$ obviously satisfies the instanton equations  \eqref{equ condition 1 G2 instant on product} and \eqref{equ condition 2 G2 instant on product}.   The ``if'' in  Theorem \ref{Thm Rigidity}.$\mathbb{I}$.2 is proved.

The proof of Theorem \ref{Thm Rigidity}.$\mathbb{II}$ ($1$ and $2$) is by repeating exactly the above argument, changing  the manifold $X$ into the $7-$dimensional $M$,  changing the closed form $Re\Omega$ into the co-associative form $\psi$ on $M$, and using the $Spin(7)-$instanton equation \eqref{equ Spin7 instanton on the product} instead of the $G_{2}-$instanton equations \eqref{equ condition 1 G2 instant on product},  \eqref{equ condition 2 G2 instant on product}. 

 To prove Theorem \ref{Thm Rigidity}.$\mathbb{III}$  ($1$ and $2$),  by Kunneth-Theorem for the Hodge-DeRham cohomology and the condition that $H^{1}(X,\R)=0$, $H^{1}(X\times S^{1}, \R)$ is spanned by $dt$. Then on $X\times S^{1}$, $A$ is a projective $G_{2}-$instanton if and only if
 \begin{equation}\frac{\sqrt{-1}}{2\pi}\star(F_{A}\wedge \psi)=\mu dt\otimes  Id_{E}\ \textrm{for some real number}\ \mu.
\end{equation}

By the tensor calculations \eqref{equ 0 G2 case}--\eqref{equ condition 2 G2 instant on product}, $A$ is a projective $G_{2}-$instanton if and only if \eqref{equ condition 1 G2 instant on product} and $\frac{\sqrt{-1}}{2\pi}F_{X,A_{X}}\lrcorner_{\omega} \omega=\mu Id_{E}$ hold true [instead of the $0-$slope condition \eqref{equ condition 2 G2 instant on product}] . The rest of the proof is identical to that of Theorem \ref{Thm Rigidity}.$\mathbb{I}$ ($1$ and $2$) above. \end{proof}
\section{Topology of the moduli: proof of the second part of  Theorem \ref{Thm Rigidity} including I3, I4, II3, II4, III3, III4   \label{top}  }
\subsection*{The natural maps $\rho$, $\tau_{B}$, and $\tau$ between spaces of gauge equivalence classes of connections}
Let  $E\rightarrow Y$ be a Hermitian vector bundle as in Definition \ref{Def isotrivial}, and  $\mathfrak{M}^{isotrivial}_{Y\times S^{1},\pi^{\star}E}$ ($\mathfrak{M}^{isotrivial,irred}_{Y\times S^{1},\pi^{\star}E}$) denote the set of (irreducible) gauge equivalence classes of iso-trivial connections on\\ $\pi^{\star}E\rightarrow Y\times S^{1}$, respectively. The proof for the topological statements in the title of this section does not essentially involve the instanton or Hermitian Yang-Mills condition. 

Next, we define the natural map $\rho$ between $\mathfrak{M}^{isotrivial}_{Y\times S^{1},\pi^{\star}E}$ and $\Lambda_{E,Y}$ (see Definition \ref{Def top}). The ``$\rho$" in Theorem \ref{Thm Rigidity}.$\mathbb{I},\mathbb{II},\mathbb{III}.3$ is the restriction of the $\rho$ here onto the moduli of instantons. 

 \begin{Def}\label{Def map between isotrivial modulis} Let the map $\rho:\ \mathfrak{M}^{isotrivial}_{Y\times S^{1},\pi^{\star}E}\rightarrow \Lambda_{E,Y}$ be such that  $\rho(A)=A_{Y}(0)$. In other words, $\rho$ is the restriction of the component $A_{Y}$ to the zero $t-$slice.  For any $[B]\in \Lambda_{E,Y}$ and $B$ representing $[B]$, we define the fiber-wise map $\tau_{B}:\ \rho^{-1}([B])\rightarrow CON(\Gamma_{B})$ by 
\begin{equation}\label{equ Def map between isotrivial modulis 1}\tau_{B}\{[u(B)]\} =[u(2\pi)].\end{equation}
It is well defined because of the characterization of gauge equivalence in Proposition \ref{Prop gauge equivalence characterization}: as long as  $\widetilde{u}(B)$ is   gauge equivalent to $u(B)$,  $\widetilde{u}(2\pi)$ is conjugate to $u(2\pi)$ in the stabilizer group $\Gamma_{B}$.

  For any gauge $s$ on $Y$, the map $\gamma_{s}(b)\triangleq s^{-1}bs$ is an isomorphism from $\Gamma_{B}$ to $\Gamma_{s(B)}$ (as compact sub-groups of $\mathfrak{G}$). It degenerates to a homeomorphism from $CON(\Gamma_B)$ to 
 $CON[\Gamma_s(B)]$, which is still denoted by $\gamma_{s}(b)$. The following diagram commutes.   \begin{center}
\begin{tikzpicture}[->,>=stealth',shorten >=1pt,auto,node distance=2.5cm,
  thick,main node/.style={rectangle,draw}]
  \node at (0.5,0.80)   {$\tau_{s(B)}$} ;
  \node at (0.5,-0.75)   {$\tau_{B}$} ;
\node at (-1,0) (1)  {$\rho^{-1}([B])$} ;
     \node (2) at (2,1) {$CON[\Gamma_s(B)]$} ;
      \node (3) at (2,-1)  {$CON(\Gamma_B)$} ;
       \node  at (2.25,0)  {$\gamma_{s}$} ;
      \path[every node/.style={font=\sffamily\small}]
       (1) edge node [right] {} (2)
(1) edge node [right] {} (3)
(3) edge node [right] {} (2); \end{tikzpicture}
\end{center}

In a related manner, on irreducible connections, we define the map $$\tau:\ \mathfrak{M}^{isotrivial,irred}_{Y\times S^{1},\pi^{\star}E}\rightarrow Center[U(m)]\times \Lambda_{E,Y}^{irred}  $$ by 
\begin{equation}\label{equ Def map between isotrivial modulis 2}\tau\{[u(B)]\} = \{u(2\pi),[B]\}.\end{equation}
 \end{Def}
 \begin{rmk}\label{rmk maps are well defined} Similarly to the argument below \eqref{equ Def map between isotrivial modulis 1}, Proposition \ref{Prop gauge equivalence characterization} implies that 
 $\tau$ is also well defined i.e. it does not depend on the representative chosen in the gauge equivalence class $[u(B)]$.
 
 \end{rmk}

 \subsection*{Continuity of the natural maps $\rho$, $\tau_{B}^{-1}$, $\tau^{-1}$}
 After spelling out the definitions of $\rho,\ \tau_{B},\ \tau$, we now turn to continuity.   \begin{rmk}\label{Rmk Y and Y times} From here to  the end of the proof of Proposition \ref{prop hard continuity}, regarding the difference between the manifolds $Y\times S^{1}$ and $Y$ (cf. Remark \ref{rmk Y}), let $||\cdot ||$ be the norm (defined in \eqref{equ Def dist and norm on moduli}) on the product manifold $Y\times [0,2\pi]$ or $Y\times S^{1}$,  and let  $||\cdot||_{Y}$ mean the similar norm on the cross-section $Y$.  If there is only $Y$ but no $Y\times [0,2\pi]$ or $Y\times S^{1}$ in the context, we suppress the subscript $Y$ in the norm. 
 \end{rmk}

We start from the convergence of iso-trivial connections. 
\begin{lem}\label{Lem top up and down} In view of Definition \ref{Def ab gauges and iso trivial} and Remark \ref{Rmk Y and Y times}, suppose $B_{i}$, $B$ are smooth connections on $E\rightarrow Y$, and $[u_{i}(B_{i})],[u(B)]\in  \mathfrak{M}^{isotrivial}_{Y\times S^{1},\pi^{\star}E}$. Then\\   $\lim_{i\rightarrow \infty}d_{\Lambda_{\pi^{\star}E,Y\times S^{1}}}\{[u_{i}(B_{i})],[u(B)]\}=0$ if and only if there exist smooth gauges $g_{i}$ on\\ $\pi^{\star}E\rightarrow Y\times S^{1}$ such that
\begin{equation}\label{equ Lem top up and down}  \lim_{i\rightarrow \infty} ||B_{i}-\eta_{i,Y}(B)||=0\  \textrm{and}\ \lim_{i\rightarrow \infty} ||\eta_{i}^{-1}\frac{\partial \eta_{i}}{\partial t}||=0,\ \textrm{where}\ \eta_{i}\triangleq u g_{i}u_{i}^{-1}. \end{equation}
\end{lem}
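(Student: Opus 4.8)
The plan is to collapse the two-sided equivalence onto a single \emph{exact} identity between sup-norms, after which both implications are immediate (one by passing to the infimum, the other by the triangle inequality). First I would recall that, by Definition \ref{Def top}, the quantity in question is an infimum,
\begin{equation*} d_{\Lambda_{E,Y}}\{[u_i(B_i)],[u(B)]\}=\inf_{g}\|u_i(B_i)-g(u(B))\|, \end{equation*}
over smooth gauges $g$ on $Y\times S^{1}$, with $\|\cdot\|=\sup_{Y\times S^{1}}|\cdot|$. Thus the whole content is to understand, for a \emph{fixed} gauge $g$, the norm $\|u_i(B_i)-g(u(B))\|$, and to re-express it through the slice data appearing in \eqref{equ Lem top up and down}.

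The heart of the argument is a change of variables. For a gauge $g_i$ on $Y\times S^{1}$ set $\eta_i\triangleq u g_i u_i^{-1}$; this is a bijective correspondence since $g_i=u^{-1}\eta_i u_i$. Using the right-multiplication property \eqref{equ right multiplication of gauges} twice, together with $u g_i=\eta_i u_i$, I would rewrite $g_i(u(B))=(ug_i)(B)=(\eta_i u_i)(B)=u_i(\eta_i(B))$. Because the gauge action $A\mapsto u_i(A)=u_i^{-1}Au_i+u_i^{-1}d u_i$ is affine in $A$ with linear part $\mathrm{Ad}_{u_i^{-1}}$, the inhomogeneous terms cancel in the difference, and since $u_i$ is unitary the conjugation is an isometry of $\Omega^\bullet(adE)$; hence
\begin{equation*} \|u_i(B_i)-g_i(u(B))\|=\|u_i^{-1}\big(B_i-\eta_i(B)\big)u_i\|=\|B_i-\eta_i(B)\|. \end{equation*}

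Next I would decompose the right-hand side via \eqref{equ gauge action on product vs slice}. Since both $B_i$ and $B$ are pulled back from $Y$ and carry no $dt$-component, one has $\eta_i(B)=\eta_{i,Y}(B)+(\eta_i^{-1}\partial_t\eta_i)\,dt$, so that $B_i-\eta_i(B)=\big(B_i-\eta_{i,Y}(B)\big)-(\eta_i^{-1}\partial_t\eta_i)\,dt$. As the product metric makes $dt$ pointwise orthogonal to $T^{*}Y$, the pointwise norm splits as a sum of squares, which upon taking suprema yields the sandwich
\begin{equation*} \max\big(\|B_i-\eta_{i,Y}(B)\|,\ \|\eta_i^{-1}\partial_t\eta_i\|\big)\ \le\ \|B_i-\eta_i(B)\|\ \le\ \|B_i-\eta_{i,Y}(B)\|+\|\eta_i^{-1}\partial_t\eta_i\|. \end{equation*}
Combining this with the previous identity finishes both directions. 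For ``only if'', from $d_{\Lambda_{E,Y}}\to 0$ I pick near-minimizing gauges $g_i$ with $\|u_i(B_i)-g_i(u(B))\|\to 0$; then $\|B_i-\eta_i(B)\|\to 0$, and the lower bound forces both limits in \eqref{equ Lem top up and down}. For ``if'', given $g_i$ realizing \eqref{equ Lem top up and down}, the upper bound gives $\|B_i-\eta_i(B)\|\to 0$, hence $\|u_i(B_i)-g_i(u(B))\|\to 0$, and since the distance is an infimum it tends to $0$.

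The only genuinely delicate point — the step I expect to be the main obstacle — is the bookkeeping ensuring that $\eta_i=ug_iu_i^{-1}$ is a bona fide smooth gauge, so that $\eta_{i,Y}(B)$ and $\eta_i^{-1}\partial_t\eta_i$ are smooth and the manipulation $g_i(u(B))=u_i(\eta_i(B))$ is valid on all of $Y\times S^{1}$ rather than merely on the interior $Y\times(0,2\pi)$. This rests on the $B$- and $B_i$-admissibility of $u,u_i$ (Definition \ref{Def isotrivial}) and on the smoothness/periodicity supplied by Claim \ref{clm periodic gauges} and Remark \ref{rmk isotrivial connections are smooth}; once these are invoked, the interior computation extends by continuity to the closure and the two readings of $\|\cdot\|$ (over $Y\times[0,2\pi]$ and over $Y\times S^{1}$) agree. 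Everything else is exact algebra.
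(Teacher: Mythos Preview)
Your argument is correct and is exactly the paper's approach, carried out with considerably more detail: the paper's two-line proof records the identity $\|u_i(B_i)-g_i[u(B)]\|=\|B_i-\eta_i(B)\|$ and the splitting $B_i-\eta_i(B)=(B_i-\eta_{i,Y}(B))-\eta_i^{-1}\partial_t\eta_i\,dt$, which you reproduce together with the orthogonality sandwich and the two directions of the equivalence. One small clarification on your flagged ``main obstacle'': $\eta_i=ug_iu_i^{-1}$ is in general \emph{not} periodic (since $u(2\pi)\neq Id$ in general), so it is a smooth gauge only on $Y\times[0,2\pi]$, not on $Y\times S^{1}$; but as you note this is harmless, because the quantities whose sup-norms you compare are periodic and the identity holds pointwise on the interior, hence on the closure by continuity---which is precisely why the paper's preceding remark stipulates that $\|\cdot\|$ is taken over $Y\times[0,2\pi]$.
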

\begin{proof} It suffices to observe that $||u_{i}(B_{i})-g_{i}[u(B)]||=||B_{i}-ug_{i}u_{i}^{-1}(B)||$.
 Then use\\ $B_{i}-\eta_{i}(B)=(B_{i}-B-\eta_{i}^{-1}d_{Y,B}\eta_{i})-\eta_{i}^{-1}\frac{\partial \eta_{i}}{\partial t}dt.$
\end{proof}

Lemma \ref{Lem top up and down} directly implies the continuity of the map $\rho$. This is crucial for Theorem \ref{Thm Rigidity}.$\mathbb{I},\mathbb{II},\mathbb{III}.3$. 
\begin{cor}\label{cor rho continuous} In the same setting as Definition \ref{Def map between isotrivial modulis}  and Lemma \ref{Lem top up and down},\\ $\rho:\ \mathfrak{M}^{isotrivial}_{Y\times S^{1},\pi^{\star}E}\rightarrow \Lambda_{E,Y}$ is continuous. Consequently, for any subset $\mathfrak{M}\subset \mathfrak{M}^{isotrivial}_{Y\times S^{1},\pi^{\star}E}$, under the induced topology, the map  $\rho:\ \mathfrak{M} \rightarrow \rho(\mathfrak{M})$ is continuous. 
\end{cor}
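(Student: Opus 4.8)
The plan is to establish continuity through its sequential characterization, since both source and target carry metric topologies (Definition \ref{Def top}). First I would record that for any $B$-admissible gauge $u$ one has $u(0)=Id$, so $\rho([u(B)])=[u(B)(0)]=[B]$; thus $\rho$ simply reads off the gauge class on $Y$ of the cross-sectional connection $B$. It therefore suffices to show that whenever $[u_i(B_i)]\to[u(B)]$ in $\mathfrak{M}^{isotrivial}_{Y\times S^{1},\pi^{\star}E}$, i.e. $\lim_i d_{\Lambda_{E,Y}}\{[u_i(B_i)],[u(B)]\}=0$ with the metric taken on $Y\times S^{1}$, one has $\lim_i d_{\Lambda_{E,Y}}([B_i],[B])=0$ with the metric on $Y$.

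I would then invoke the forward direction of Lemma \ref{Lem top up and down}: the assumed convergence supplies smooth gauges $g_i$ on $\pi^{\star}E\rightarrow Y\times S^{1}$ so that, with $\eta_i\triangleq u g_i u_i^{-1}$, both $\lim_i\|B_i-\eta_{i,Y}(B)\|=0$ and $\lim_i\|\eta_i^{-1}\frac{\partial\eta_i}{\partial t}\|=0$ hold; only the first limit is needed here. The key observation I would use is that, because $B$ is pulled back from $Y$ and hence $t$-independent, for each fixed $t_0$ the slice-restriction $h_i\triangleq\eta_i(\cdot,t_0)$ is a smooth gauge on $Y$ with $(\eta_{i,Y}(B))(\cdot,t_0)=h_i(B)$. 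Since $\|\cdot\|$ is the supremum over all $(p,t)\in Y\times S^{1}$ whereas $\|\cdot\|_Y$ is the supremum over $p$ on a single slice, this yields $\|B_i-h_i(B)\|_Y\leq\|B_i-\eta_{i,Y}(B)\|$. Taking $t_0=0$ and passing to the infimum over gauges on $Y$ would give $d_{\Lambda_{E,Y}}([B_i],[B])\leq\|B_i-h_i(B)\|_Y\to 0$, which is exactly $\rho([u_i(B_i)])\to\rho([u(B)])$, proving continuity of $\rho$.

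For the closing ``consequently'' clause I would simply appeal to the standard fact that the restriction of a continuous map to any subset, equipped with the induced topology, remains continuous; nothing beyond the continuity of $\rho$ just shown is required. I do not expect a serious obstacle in this corollary, as it is essentially a restriction-to-a-slice argument. The only points demanding care are the bookkeeping distinction between the two norms $\|\cdot\|$ and $\|\cdot\|_Y$ (cf. the remark preceding Lemma \ref{Lem top up and down}) and the verification that slicing $\eta_i$ at a fixed $t_0$ interacts correctly with the $Y$-gauge action on the $t$-independent connection $B$; both become immediate once the definitions are unwound.
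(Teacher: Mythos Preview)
Your proposal is correct and is precisely the unwinding the paper has in mind: the paper gives no proof beyond the sentence ``Lemma \ref{Lem top up and down} directly implies,'' and your slicing argument---using only the first limit in \eqref{equ Lem top up and down} and restricting $\eta_i$ to a fixed $t_0$ to produce a gauge on $Y$---is exactly how that implication is cashed out. Your care in distinguishing $\|\cdot\|$ from $\|\cdot\|_Y$ and noting that $B$ is $t$-independent is appropriate; nothing further is needed.
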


\begin{lem} \label{lem easy continuity} In the same setting as Definition \ref{Def map between isotrivial modulis}  and Lemma \ref{Lem top up and down}, \begin{enumerate}\item both $\tau_{B}$ and $\tau$ are bijective.  \item For any $[B]\in \Lambda_{E,Y}$ and representative $B$, $\tau_{B}^{-1}:\ CON(\Gamma_{B})\rightarrow \rho^{-1}([B])$ is continuous. 
\item $\tau^{-1}:\ Center[U(m)]\times \Lambda_{E,Y}^{irred} \rightarrow \mathfrak{M}^{isotrivial,irred}_{Y\times S^{1},\pi^{\star}E}$ is continuous. \end{enumerate}
\end{lem}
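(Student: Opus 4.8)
The plan is to handle the three assertions of Lemma \ref{lem easy continuity} in turn, deducing the bijectivity from the gauge-equivalence criterion together with the existence lemma, and deducing the two continuity statements from the explicit construction of $B$-admissible gauges fed into the convergence criterion of Lemma \ref{Lem top up and down}.

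For the bijectivity in item 1, I would read Proposition \ref{Prop gauge equivalence characterization} in the two relevant special cases. For $\tau_B$, applying the proposition with $\widetilde B=B$ shows that $[u(B)]=[v(B)]$ in $\rho^{-1}([B])$ holds precisely when there is $g\in\Gamma_B$ with $u(2\pi)=g\,v(2\pi)\,g^{-1}$; since the right-hand condition is exactly equality of conjugacy classes in $CON(\Gamma_B)$, this yields well-definedness and injectivity of $\tau_B$ simultaneously. Surjectivity is immediate from Lemma \ref{lem there exists a gauge connecting id to arbitrary a}: any $a$ representing a class in $CON(\Gamma_B)$ is the endpoint $u(2\pi)$ of some $B$-admissible $u$, and $\rho([u(B)])=[B]$ because $u(0)=Id$. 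For $\tau$ I would use that irreducibility forces $\Gamma_B=Center[U(m)]$ to be abelian, so the conjugation in Proposition \ref{Prop gauge equivalence characterization} collapses and $[u(B)]=[v(\widetilde B)]$ becomes $\{u(2\pi),[B]\}=\{v(2\pi),[\widetilde B]\}$, while Lemma \ref{lem irred} guarantees the bijection respects the irreducible loci.

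The observation driving both continuity statements is that the $B$-admissible gauges produced in Lemma \ref{lem there exists a gauge connecting id to arbitrary a} are rigid in the fibre direction. In item 2 the base $B$ is fixed and the endpoints lie in $\Gamma_B$; since $\tau_i=a_i+\gamma(t)(Id-a_i)$ is built from $B$-parallel data and the unitarization $P$ is analytic (Lemma \ref{lem sqrt is real analytic}), the gauge $u_i=P(\tau_i)$ is itself $B$-parallel for every $t$, whence $u_i(B)=B+\chi_{u_i}\,dt$. In item 3 the endpoints are central, so $\tau_i=\beta_i(t)\,Id$ is scalar and $u_i$ is a $t$-dependent phase, giving $u_i(B_i)=B_i+\chi_{u_i}\,dt$. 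In both cases the iso-trivial connection differs from its base only in the $dt$-component, which makes the convergence transparent.

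Concretely, for item 2 I would first use compactness of $\Gamma_B\subset U(m)$ to replace a sequence with $[a_i]\to[a]$ in $CON(\Gamma_B)$ by conjugate representatives satisfying $a_i\to a$, which is legitimate by well-definedness of $\tau_B^{-1}$. Fixing one cutoff profile $\gamma$ avoiding the finitely many roots of $\det[a+x(Id-a)]$ (constant on $Y$ by Claim \ref{clm det is a constant}), the continuity of these roots as $a$ varies and the compactness of $\gamma([0,2\pi])$ ensure the same $\gamma$ avoids the roots of $a_i$ for all large $i$; analyticity of $P$ then yields $u_i\to u$ uniformly together with $\partial_t u_i\to\partial_t u$, hence $\chi_{u_i}\to\chi_u$. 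Applying Lemma \ref{Lem top up and down} with $g_i=Id$, so that $\eta_i=u u_i^{-1}\in\Gamma_B$ pointwise and $\eta_{i,Y}(B)=B$, discharges both of its hypotheses. For item 3 I would choose, for each $i$, a near-minimizing $Y$-gauge $h_i$ with $\|B_i-h_i(B)\|_Y\to0$ and apply Lemma \ref{Lem top up and down} with $g_i$ the $t$-independent extension of $h_i$; because $u,u_i$ are scalar, $\eta_i=\lambda_i(t)\,h_i$ with $\lambda_i$ a phase, so $\eta_{i,Y}(B)=h_i(B)$ and $\eta_i^{-1}\partial_t\eta_i=(\chi_u-\chi_{u_i})Id\to0$, again verifying both hypotheses. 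The main obstacle is the uniform control in item 2: converting convergence of conjugacy classes into honest convergence of endpoints, and then propagating it through the moving root sets and the nonlinear map $P$ to $C^1$-convergence of the gauges, and hence of their $dt$-components, uniformly on $Y\times[0,2\pi]$.
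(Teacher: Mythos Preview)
Your proposal is correct and follows essentially the same route as the paper's proof. For item~1 you invoke Proposition~\ref{Prop gauge equivalence characterization}, Lemma~\ref{lem there exists a gauge connecting id to arbitrary a}, and Lemma~\ref{lem irred} exactly as the paper does; for item~2 you build the gauges $u_i,u$ from the endpoints via the same $P\circ\tau$ construction with a fixed profile $\gamma$, upgrade $C^0$-convergence of the (parallel) endpoints to $C^1$-convergence of the gauges, and conclude moduli convergence. The only cosmetic differences are that you route the final step through Lemma~\ref{Lem top up and down} with $g_i=Id$ instead of directly writing $\|u_i(B)-u(B)\|\to0$, and that you make explicit the useful observation that $u_i(t)\in\Gamma_B$ for every $t$ (so $u_i(B)=B+\chi_{u_i}dt$), which the paper leaves implicit. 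For item~3 the paper says only ``similar argument''; your concrete version exploiting that central endpoints yield scalar, $Y$-constant gauges, together with a near-minimizing $Y$-gauge $h_i$ fed into Lemma~\ref{Lem top up and down}, is a correct and natural way to fill in that gap.
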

By the commutative diagram between \eqref{equ Def map between isotrivial modulis 1} and Remark \ref{rmk maps are well defined}, the continuity  in Lemma \ref{lem easy continuity}.2 is independent of the representative chosen in $[B]$. 

\begin{proof}  That $\tau_{B}$ is surjective follows directly from Lemma \ref{lem there exists a gauge connecting id to arbitrary a}. That  $\tau_{B}$ is injective  follows directly from  Proposition \ref{Prop gauge equivalence characterization}. By a similar argument, Proposition \ref{Prop gauge equivalence characterization} and  Lemmas \ref{lem there exists a gauge connecting id to arbitrary a} and  \ref{lem irred} imply that $\tau$ is a bijection. 

Next, we prove statement 2. Statement 3 follows by similar argument. 

Suppose $[a_{i}]\rightarrow [a]$ in $CON(\Gamma_{B})$. It means that there exist gauges $b_{i}\in \Gamma_{B}$ such that
\begin{equation}\label{equ 0 lem easy continuity} \lim_{i\rightarrow \infty}|| b_{i}^{-1}a_{i}b_{i}-a||_{Y}=0.\end{equation}
In view of the $C^{k}-$norm in Definition \ref{Def Cinfty top} below, because $d_{B}( b_{i}^{-1}a_{i}b_{i}-a)=0$ (and $B$ is smooth), for any $k\geq 0$ (particularly for $k=1$ which is all we need), we find 
\begin{equation}\label{equ 0.5 lem easy continuity} \lim_{i\rightarrow \infty}|| b_{i}^{-1}a_{i}b_{i}-a||_{C^{k}[Y,\pi^{\star}EndE]}=0.\end{equation}
As  in Claim \ref{clm there exists a good path} and below \eqref{eqn P commutes with conjugation}, let
 \begin{equation}u_{i}\triangleq P\{b_{i}^{-1}a_{i}b_{i}+\gamma(t)[Id-(b_{i}^{-1}a_{i}b_{i})]\},\ u=P[a+\gamma(t)(Id-a)],
\end{equation}
where $\gamma(t)$ avoids a small enough open neighborhood of all the roots of $det(a+x[Id-a])$ (in terms of $x$, see the material from Claim \ref{clm there exists a good path} to Claim \ref{clm det is a constant}). Then $u$ is a unitary gauge, and when $i$ is large enough (such that $b_{i}^{-1}a_{i}b_{i}+\gamma(t)[Id-(b_{i}^{-1}a_{i}b_{i})]$ is invertible), so is $u_{i}$. Moreover, \eqref{equ 0.5 lem easy continuity}  implies that $\lim_{i\rightarrow \infty}||u_{i}-u||_{C^{1}[Y\times [0,2\pi],\pi^{\star}EndE]}=0$ (see Definition \ref{Def Cinfty top} below). This implies $\lim_{i\rightarrow \infty}||u_{i}(B)-u(B)||=0$. Hence $$\lim_{i\rightarrow \infty}d_{\Lambda_{\pi^{\star}E,Y\times S^{1}}}([u_{i}(B)],[u(B)])=0.$$\end{proof}

\subsection*{Continuity of $\tau_{B}$ and $\tau$}
The continuity of   $\tau_{B}$ and $\tau$ is by another approach. 
\begin{prop} \label{prop hard continuity} In view of Lemma \ref{lem easy continuity}, \begin{enumerate}  \item $\tau_{B}:\ \rho^{-1}([B]) \rightarrow CON(\Gamma_{B}) $ is continuous for any $[B]\in \Lambda_{E,Y}$, therefore is a homeomorphism.
\item $\tau:\  \mathfrak{M}^{isotrivial,irred}_{Y\times S^{1},\pi^{\star}E}\rightarrow Center[U(m)]\times \Lambda_{E,Y}^{irred} $ is continuous, therefore is a homeomorphism.  \end{enumerate}
\end{prop}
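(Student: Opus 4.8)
The plan is to prove only the two continuity statements, after which the homeomorphism claims are immediate: Lemma~\ref{lem easy continuity} already supplies that $\tau_B$ and $\tau$ are bijections with continuous inverses, so continuity of $\tau_B$ and $\tau$ themselves closes the loop. The whole argument runs through the convergence criterion of Lemma~\ref{Lem top up and down}. For part 1, since both $\tau_B$ and the metric on $\rho^{-1}([B])$ are independent of the representatives chosen (and since every class in $\rho^{-1}([B])$ has, by the bijectivity in Lemma~\ref{lem easy continuity}, the form $[u(B)]$ for a $B$-admissible $u$ over the fixed base $B$), I may assume the base connection is literally $B$ for every term of the sequence.

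First I would take a sequence $[u_i(B)]\to[u(B)]$ in $\rho^{-1}([B])$ and invoke Lemma~\ref{Lem top up and down} with $B_i=B$: there are smooth gauges $g_i$ on $\pi^\star E\to Y\times S^1$ such that, writing $\eta_i\triangleq u\,g_i\,u_i^{-1}$, one has $\|d_{Y,B}\eta_i\|\to 0$ (equivalently $\|B-\eta_{i,Y}(B)\|\to 0$) and $\|\eta_i^{-1}\partial_t\eta_i\|\to 0$. Since the $\eta_i$ are unitary, the second condition gives $\|\partial_t\eta_i\|\to 0$, hence $\|\eta_i(2\pi)-\eta_i(0)\|_Y\to 0$ by integrating in $t$. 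Using $u(0)=u_i(0)=Id$ and the periodicity $g_i(2\pi)=g_i(0)=:\gamma_i$, this reads $u(2\pi)\gamma_i u_i(2\pi)^{-1}-\gamma_i\to 0$ in $\|\cdot\|_Y$; multiplying on the right by the bounded unitary factors $u_i(2\pi)$ and then $\gamma_i^{-1}$ yields the crucial relation
\begin{equation*}
\|\gamma_i\,u_i(2\pi)\,\gamma_i^{-1}-u(2\pi)\|_Y\to 0 .
\end{equation*}
Thus $[u_i(2\pi)]$ would converge to $[u(2\pi)]$ in $CON(\Gamma_B)$ provided the approximately $B$-parallel unitary gauges $\gamma_i=g_i(0)$ could be replaced by honest elements of $\Gamma_B$.

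This replacement is the main obstacle, and I expect it to be the only genuinely analytic point. The bound $\|d_{Y,B}\gamma_i\|\to 0$ yields a uniform modulus of continuity (equicontinuity) for $\{\gamma_i\}$, while unitarity gives pointwise boundedness, so by Arzel\`a--Ascoli a subsequence converges in $C^0(Y,\mathrm{End}\,E)$ to a unitary $\gamma_\infty$; testing $d_{Y,B}\gamma_i\to 0$ against smooth sections shows $d_{Y,B}\gamma_\infty=0$, i.e. $\gamma_\infty\in\Gamma_B$. Setting $b\triangleq\gamma_\infty^{-1}\in\Gamma_B$ and factoring $\gamma_\infty u_i(2\pi)\gamma_\infty^{-1}=(\gamma_\infty\gamma_i^{-1})(\gamma_i u_i(2\pi)\gamma_i^{-1})(\gamma_i\gamma_\infty^{-1})$, the $C^0$-convergences $\gamma_\infty\gamma_i^{-1}\to Id$ together with the crucial relation give $\|b^{-1}u_i(2\pi)b-u(2\pi)\|_Y\to 0$ along the subsequence, hence $d_{CON(\Gamma_B)}([u_i(2\pi)],[u(2\pi)])\to 0$. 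Since every subsequence admits a further subsequence converging to the same limit, the full sequence converges, proving continuity of $\tau_B$.

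For part 2 the same computation applies, now with (possibly varying) irreducible base connections $B_i$ and limit $B$. Continuity of the second component $[B]$ is exactly Corollary~\ref{cor rho continuous}. For the first component, irreducibility gives $\Gamma_{B_i}=\Gamma_B=Center[U(m)]$, so $u_i(2\pi)$ and $u(2\pi)$ are central scalars and the conjugation by $\gamma_i$ is trivial: the crucial relation collapses, after cancelling the bounded invertible factor $\gamma_i$, directly to $\|u_i(2\pi)-u(2\pi)\|_Y\to 0$, with no compactness needed. Combining the two components shows $\tau$ is continuous, completing the proof.
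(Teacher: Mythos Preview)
Your proof is correct and follows essentially the same route as the paper: both use Lemma~\ref{Lem top up and down}, integrate $\partial_t\eta_i$ to compare $\eta_i(0)$ with $\eta_i(2\pi)$, and then approximate the nearly $B$-parallel gauge $\gamma_i=\eta_i(0)$ by an element of $\Gamma_B$ to conclude. The paper packages your Arzel\`a--Ascoli step as a separate appendix lemma (Lemma~\ref{lem close to the stabilizer}) and works with $i$-dependent $a_i\in\Gamma_B$ close to $\eta_i(0)$ rather than extracting a single limit $\gamma_\infty$ along subsequences, but this and the slightly rearranged algebra are cosmetic differences.
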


\begin{proof} To prove ``1",   suppose \begin{equation}\label{equ 0 proof of Thm Rigidity top I3} u_{i}(B),\ u(B) \in \mathfrak{M}^{isotrivial}_{Y\times S^{1},\pi^{\star}E}\  \textrm{and}\  \lim_{i\rightarrow \infty}||u_{i}(B)-g_{i}[u(B)]||=0,\end{equation} where $g_{i}$ are  gauges on $Y\times S^{1}$. By definition of $\tau_{B}$, we need to show that 
\begin{equation}\label{equ 1 proof of Thm Rigidity top I3}\lim_{i\rightarrow \infty}d_{CON(\Gamma_{B})}\{[u_{i}(2\pi)], [u(2\pi)]\}=0\ [\textrm{see the metric in}\ \eqref{equ 0 Def top}].\end{equation}
In view of the equivalent conditions of convergence in Lemma \ref{Lem top up and down}, let $\eta_{i}$ be as in  \eqref{equ Lem top up and down}. Condition \eqref{equ 0 proof of Thm Rigidity top I3} yields 
\begin{equation}\label{equ 2 proof of Thm Rigidity top I3}\lim_{i\rightarrow \infty}||\eta_{i}^{-1}d_{B,Y}\eta_{i}||=0,\ \lim_{i\rightarrow \infty}||\eta_{i}^{-1}\frac{\partial \eta_{i}}{\partial t}||=0. \end{equation}

 By the existence in Lemma \ref{lem close to the stabilizer} below, and the first condition in \eqref{equ 2 proof of Thm Rigidity top I3}, there exists $a\in \Gamma_{B}$ such that 
 \begin{equation}\label{equ 2.5 proof of Thm Rigidity top I3}\lim_{i\rightarrow \infty}||\eta_{i}(0)-a||_{Y}=0.\end{equation}
 Integrating the second condition in \eqref{equ 2 proof of Thm Rigidity top I3} with respect to $t$, we find\\ $\lim_{i\rightarrow \infty}||\eta_{i}(2\pi)-\eta_{i}(0)||_{Y}=0$.  Then triangle-inequality yields 
\begin{equation}\label{equ 3 proof of Thm Rigidity top I3}
\lim_{i\rightarrow \infty}||\eta_{i}(2\pi)-a||_{Y}=0. 
\end{equation} 

Using \eqref{equ 2.5 proof of Thm Rigidity top I3}, \eqref{equ 3 proof of Thm Rigidity top I3}, $\eta_{i}(0)=g_{i}(0)=g_{i}(2\pi)$, and that  
\begin{eqnarray*}& &||a^{-1}u(2\pi)a-u_{i}(2\pi)||_{Y}=||u(2\pi)au^{-1}_{i}(2\pi)-a||_{Y}
\\&=&||u(2\pi)au^{-1}_{i}(2\pi)-u(2\pi)g_{i}(2\pi)u^{-1}_{i}(2\pi)+\eta_{i}(2\pi)-a||_{Y}
\\&\leq &||a-g_{i}(2\pi)||_{Y}+||\eta_{i}(2\pi)-a||_{Y},
\end{eqnarray*}
we find $\lim_{i\rightarrow \infty}||a^{-1}u(2\pi)a-u_{i}(2\pi)||_{Y}=0$. Therefore \eqref{equ 1 proof of Thm Rigidity top I3} is true. The proof of ``1" is complete. 

Next, on irreducible connections, we prove ``$2$" similarly to the fiber-wise case above. Suppose
\begin{equation}\label{equ 0 proof of Thm Rigidity top II3}\lim_{i\rightarrow \infty}||u_{i}(B_{i})-g_{i}[u(B)]||=0,\ [\textrm{note the slight difference from}\ \eqref{equ 0 proof of Thm Rigidity top I3}].  \end{equation} By definition of $\tau$, we need to show 
\begin{equation}\label{equ 1 proof of Thm Rigidity top II3}\lim_{i\rightarrow \infty}d_{\Lambda_{E,Y}}([B_{i}],[B])=0\ \textrm{and}\ \lim_{i\rightarrow \infty}||u_{i}(2\pi)-u(2\pi)||_{Y}=0.
\end{equation}
We now re-state the two identities given by Lemma \ref{Lem top up and down} under condition \eqref{equ 0 proof of Thm Rigidity top II3}. Namely, let $\eta_{i}$ be as in  \eqref{equ Lem top up and down},  Lemma \ref{Lem top up and down} yields the following.
\begin{eqnarray}& &\label{equ 1.5 proof of Thm Rigidity top II3} \lim_{i\rightarrow \infty}||B_{i}-\eta_{i,Y}(B)||=0\ (\textrm{note}\ B_{i}-\eta_{i,Y}(B)= B_{i}-B-\eta_{i}^{-1}d_{B,Y}\eta_{i}),\\
& &\label{equ 2 proof of Thm Rigidity top II3} \lim_{i\rightarrow \infty}||\eta_{i}^{-1}\frac{\partial \eta_{i}}{\partial t}||=0.\end{eqnarray}The first desired condition in \eqref{equ 1 proof of Thm Rigidity top II3} is directly implied by \eqref{equ 1.5 proof of Thm Rigidity top II3}. It remains to prove  the second using  irreducibility and  \eqref{equ 2 proof of Thm Rigidity top II3}.  Again, integrating \eqref{equ 2 proof of Thm Rigidity top II3} with respect to $t$, we find $\lim_{i\rightarrow \infty}||\eta_{i}(0)-\eta_{i}(2\pi)||_{Y}= 0$.  Hence
\begin{equation}\label{equ 3 proof of Thm Rigidity top II3}
\lim_{i\rightarrow \infty}||g_{i}(0)-u(2\pi)g_{i}(2\pi)u_{i}^{-1}(2\pi)||_{Y}=0.
\end{equation}
Irreducibility implies that $u(2\pi),\ u_{i}(2\pi)\in Center[U(m)]$. Using \eqref{equ 3 proof of Thm Rigidity top II3}
 and that
\begin{eqnarray*}
& &||u_{i}(2\pi)-u(2\pi)||_{Y}=||Id-u(2\pi)u_{i}^{-1}(2\pi)||_{Y}= ||g_{i}(0)-g_{i}(0)u(2\pi)u_{i}^{-1}(2\pi)||_{Y}
\\& =& ||g_{i}(0)-u(2\pi)g_{i}(2\pi)u_{i}^{-1}(2\pi)||_{Y}\ [\textrm{using}\ g_{i}(0)=g_{i}(2\pi)\ \textrm{and}\ u(2\pi) g_{i}(0)=g_{i}(0) u(2\pi)],
\end{eqnarray*}
we find $\lim_{i\rightarrow \infty}||u_{i}(2\pi)-u(2\pi)||_{Y}=0$. Hence the second desired condition in \eqref{equ 1 proof of Thm Rigidity top II3} holds. \end{proof}
\subsection*{Proof of the other part of Theorem \ref{Thm Rigidity} and of the example}
The above facts can be  assembled into the following proof. 
 \begin{proof}[\textbf{Proof of Theorem} \ref{Thm Rigidity} $\mathbb{I}3-4,  \mathbb{II}3-4, \mathbb{III}3-4$ :] We only show $\mathbb{I}3-4$, the others are the same.  Theorem \ref{Thm Rigidity}.$\mathbb{I}.2$ means $\mathfrak{M}_{X\times S^{1},\pi^{\star}E,\phi}\subset \mathfrak{M}^{isotrivial}_{X\times S^{1},\pi^{\star}E}$. Still by  Theorem \ref{Thm Rigidity}.$\mathbb{I}.2$,  restricting the $\rho$ in Definition \ref{Def map between isotrivial modulis} to $\mathfrak{M}_{X\times S^{1},\pi^{\star}E,\phi}$, we obtain  $\rho(\mathfrak{M}_{X\times S^{1},\pi^{\star}E,\phi})=\mathfrak{M}_{X,E,\omega-HYM-0}$. The continuity of $\rho$ follows directly from Corollary \ref{cor rho continuous} (restricted to the moduli $\mathfrak{M}_{X\times S^{1},\pi^{\star}E,\phi}$ of instantons). The second statement in $\mathbb{I}3$ follows from the topological type of a fiber characterized in Proposition \ref{prop hard continuity}.$1$ (applied to an arbitrary $[B]\in \mathfrak{M}_{X,E,\omega-HYM-0}$). 
 
 Similarly, by Lemma \ref{lem irred} (on irreducibility) and $\mathbb{I}2$,    $\mathfrak{M}^{irred}_{X\times S^{1},\pi^{\star}E,\phi}=\rho^{-1}( \mathfrak{M}^{irred}_{X,E,\omega-HYM-0})$. Then $\mathbb{I}4$ follows from Proposition \ref{prop hard continuity}.$2$ restricted to the moduli $\mathfrak{M}^{irred}_{X\times S^{1},\pi^{\star}E,\phi}$ of irreducible instantons. \end{proof}

\begin{proof}[\textbf{Proof of Corollary} \ref{cor example Thomas}:] By \cite[Theorem 4.8 and page 418 Example 1]{Thomas}, there is  a bundle $E\rightarrow X_{CY}$ as in  Corollary \ref{cor example Thomas} and  a K\"ahler-class $[\underline{\omega}]$ such that the following holds.
\begin{itemize} \item $\mathfrak{M}^{AG}_{X_{CY},E,[\underline{\omega}]-stable}$ consists of one point.
\item Any poly-stable holomorphic structure on $E$ is stable, therefore simple. By \cite[VII Proposition 4.14]{Kobayashi} and the Donaldson-Uhlenbeck-Yau Theorem  (stated in Definition \ref{Def DUY}), we obtain 
\begin{equation}\label{equ 0 Proof of Cor} \mathfrak{M}^{irred}_{X_{CY},E,\omega-HYM}=\mathfrak{M}_{X_{CY},E,\omega-HYM},\ \textrm{and both of them consist of one point}.
\end{equation}
\end{itemize} 
Let $\Omega_{0}$ be a trivialization of $K_{X_{CY}}$.  There exists $c_{0}\in \C$ (unique up a unitary factor) such that $\Omega\triangleq c_{0}\Omega_{0}$ satisfies 
\begin{equation} \int_{X_{CY}}\frac{\underline{\omega}^{3}}{3!}=\frac{\sqrt{-1}}{8}\int_{X_{CY}}\Omega\wedge\bar{\Omega}=\frac{1}{4}\int_{X_{CY}}Re\Omega\wedge Im\Omega.\end{equation}
Under the above integral normalization condition, Yau \cite{Yau} showed that there exists a unique $\omega\in [\underline{\omega}]$ satisfying the point-wise volume-form equation in Definition \ref{Def CY quadruple}.3.  The proof is then complete by \eqref{equ 0 Proof of Cor} and Theorem \ref{Thm Rigidity}.$\mathbb{III}. 3,4$. \end{proof}

 \section{Appendix}
 \subsection*{Existence of a normalized Hermitian metric in any conformal class on a $6-$manifold with a nowhere vanishing $(3,0)-$form}
The following Lemma produces $6$-manifolds with $SU(3)-$structures. It helps us produce half-closed ones  and makes our main result more meaningful (see Remark \ref{Rmk abundant SU(3) str}). Moreover, the point-wise frame in \eqref{equ lem unitary frame} helps the tensor calculations related to the instanton equations (see from \eqref{equ 0 G2 case} to \eqref{equ condition 2 G2 instant on product}).\begin{lem}\label{lem unitary frame} Let $X$ be a closed $6-$dimensional manifold  with an almost complex structure $J$ and a nowhere vanishing $(3,0)-$form $\Omega$. For any conformal class  $[\underline{g}]$ of Hermitian metrics, there is a unique Hermitian metric $g$  such that $|\Omega|_{g}^{2}=8$ i.e. $\frac{\omega^{3}}{3!}=\frac{1}{4}Re\Omega\wedge Im\Omega$, where \\
$\omega\triangleq g(J\cdot,\cdot)$ is the associated $(1,1)-$form of $g$. Consequently, at an arbitrary point $p$, there exists a unitary frame $v^{1},\ v^{2},\ v^{3} \in T^{1,0}_{p}(X)$ with respect to $g$ such that 
 \begin{equation}\label{equ lem unitary frame} \omega |_{p}=\frac{\sqrt{-1}}{2}\Sigma_{i=1}^{3}v^{i}\wedge \bar{v}^{i},\ \Omega |_{p}=v^{1}\wedge v^{2}\wedge v^{3}. 
 \end{equation}
 \end{lem}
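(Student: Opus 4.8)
The plan is to split the statement into two logically independent parts: the existence and uniqueness of the normalized metric inside the conformal class, which is a pointwise algebraic normalization rather than a genuine PDE, and the pointwise construction of the unitary frame, which is standard Hermitian linear algebra.

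First I would settle existence and uniqueness. Fix a representative $\underline{g}\in[\underline{g}]$ with associated $(1,1)$-form $\underline{\omega}$. Every Hermitian metric in the conformal class is of the form $g=e^{2f}\underline{g}$ for a smooth real function $f$ (conformal rescaling by a positive factor preserves $J$-compatibility), with associated form $\omega=e^{2f}\underline{\omega}$. The key observation is the scaling of the pointwise norm: the cometric scales by $e^{-2f}$, and since $\Omega$ is a $3$-form one has $|\Omega|^{2}_{g}=e^{-6f}|\Omega|^{2}_{\underline{g}}$. Hence the normalization $|\Omega|^{2}_{g}=8$ is equivalent to the pointwise equation $e^{6f}=\tfrac{1}{8}|\Omega|^{2}_{\underline{g}}$, which, because $\Omega$ is nowhere vanishing (so $|\Omega|^{2}_{\underline{g}}$ is smooth and strictly positive), has the unique smooth solution $f=\tfrac{1}{6}\log\!\big(\tfrac{1}{8}|\Omega|^{2}_{\underline{g}}\big)$. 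This produces a unique $g\in[\underline{g}]$, and by Definition \ref{Def CY quadruple}.3 the condition $|\Omega|^{2}_{g}=8$ is interchangeable with the stated volume-form identity $\frac{\omega^{3}}{3!}=\frac{1}{4}Re\Omega\wedge Im\Omega$.

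For the unitary frame I would argue pointwise at $p$. Because $\omega|_{p}$ is a positive $(1,1)$-form, standard Hermitian diagonalization furnishes a unitary coframe of $(1,0)$-forms $e^{1},e^{2},e^{3}$ with $\omega|_{p}=\frac{\sqrt{-1}}{2}\sum_{i}e^{i}\wedge\bar{e}^{i}$. Since $\Lambda^{3,0}_{p}X$ is complex one-dimensional, spanned by $e^{1}\wedge e^{2}\wedge e^{3}$, we may write $\Omega|_{p}=\lambda\,e^{1}\wedge e^{2}\wedge e^{3}$ for some $\lambda\in\C$. In this coframe $|e^{1}\wedge e^{2}\wedge e^{3}|^{2}=8$, so the normalization $|\Omega|^{2}=8$ forces $|\lambda|=1$; writing $\lambda=e^{\sqrt{-1}\theta}$ and setting $v^{1}=e^{\sqrt{-1}\theta}e^{1}$, $v^{2}=e^{2}$, $v^{3}=e^{3}$ leaves the relation $v^{1}\wedge\bar{v}^{1}=e^{1}\wedge\bar{e}^{1}$ unchanged (hence still a unitary coframe) while achieving $\Omega|_{p}=v^{1}\wedge v^{2}\wedge v^{3}$, which is \eqref{equ lem unitary frame}.

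The content is essentially bookkeeping, so the only place requiring care — and what I would regard as the main obstacle — is matching the normalization conventions: getting the exponent in the conformal scaling of $|\Omega|^{2}$ correct, and confirming that a unitary coframe in the convention $\omega=\frac{\sqrt{-1}}{2}\sum e^{i}\wedge\bar{e}^{i}$ gives $|e^{1}\wedge e^{2}\wedge e^{3}|^{2}=8$ (each $e^{i}$ has squared Hermitian norm $2$), so that $|\lambda|=1$ rather than some other constant. Both are pinned down by the relation recorded in Definition \ref{Def CY quadruple}.3, which I would invoke throughout to keep the two forms of the normalization condition interchangeable.
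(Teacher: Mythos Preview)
Your proof is correct and follows essentially the same approach as the paper: both use the conformal scaling law $|\Omega|^{2}_{e^{2f}g}=e^{-6f}|\Omega|^{2}_{g}$ to single out the metric in the class, and both obtain the frame by first diagonalizing $\omega$ and then absorbing the residual unit phase of $\Omega$ (you place it on $v^{1}$ alone, the paper distributes it via a cube root across all three vectors). The only cosmetic difference is ordering---the paper builds the rescaled metric and the frame in one pass, while you separate the normalization step from the pointwise linear algebra.
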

 \begin{proof} Let $\underline{\omega}$ be the positive $(1,1)-$form associated to the representative $\underline{g}$ of the conformal class. For any $p$, let $u^{1}, u^{2}, u^{3} \in T^{1,0}_{p}(X)$ be a unitary frame such that $\underline{\omega}=\frac{\sqrt{-1}}{2}\Sigma_{i=1}^{3}u^{i}\wedge \bar{u}^{i}$ and $\Omega=c_{0}(u^{1}\wedge u^{2}\wedge u^{3})$.
 Then $|c_{0}|^{2}=\frac{|\Omega|_{\underline{g}}^{2}}{8}$ is smooth. Let $h^{i}\triangleq |c_{0}|^{\frac{1}{3}}u^{i}$, we find 
 \begin{equation}\Omega=c_{1}(h^{1}\wedge h^{2}\wedge h^{3}),\  c_{1}=\frac{c_{0}}{|c_{0}|},\ \textrm{thus}\ |c_{1}|=1.\ \textrm{We define}\ \omega\triangleq |c_{0}|^{\frac{2}{3}}\underline{\omega}=\frac{\sqrt{-1}}{2}\Sigma_{i=1}^{3}h^{i}\wedge \bar{h}^{i}.
 \end{equation}
 This means that $|\Omega|^{2}_{g}=8$, where $g=\omega(\cdot,J\cdot )$ is the corresponding Hermitian metric. Finally, let $c_{2}$ be an arbitrary cubic root of $c_{1}$ at $p$, and $v_{i}\triangleq c_{2}h_{i}$. The existence of the unitary frame at $p$ (in Lemma \ref{lem unitary frame}) is proved. 
 
Next, in a fixed conformal class, we show the uniqueness  of the Hermitian metric which satisfies $|\Omega|^{2}_{g}=8$.  Suppose $\widetilde{g}=e^{2f}g$ is a another Hermitian metric satisfying \eqref{equ lem unitary frame} everywhere, then 
 \begin{equation}8=|\Omega|^{2}_{\widetilde{g}}=e^{-6f}|\Omega|^{2}_{g}=8e^{-6f}\Longrightarrow f=0.
\end{equation}
The uniqueness is proved. 
\end{proof}

 \subsection*{Smooth sections of the pullback bundle over $Y\times [0,2\pi]$}
 The following definition of smooth sections (of the pullback bundle) on the manifold $Y\times [0,2\pi]$ with boundary is applied to iso-trivial connections and related places. Please see Definition \ref{Def ab gauges and iso trivial} for example. In practice, the dummy notation ``$E$" below might be the endomorphism bundle of a specific $E$. 
 \begin{Def}\label{Def Cinfty top}In conjunction with the finite open cover (coordinate chart of the bundle) as part of Definition \ref{Def isotrivial} of a Hermitian vector bundle,  let $||\cdot||_{C^{k}[Y, E]}$ denote the $C^{k}-$norm of a section of the bundle $E\rightarrow Y$. It is defined as the weighted sum of the $C^{k}-$norms of the matrix-valued functions in coordinate charts with respect to the partition of unity.

 We define the $C^{\infty}[Y, E]-$topology by the following.
\begin{equation} \lim_{j\rightarrow \infty} \phi_{j}=\phi_{\infty}\ \textrm{in}\ C^{\infty}\Longleftrightarrow \lim_{j\rightarrow \infty} \phi_{j}=\phi_{\infty}\ \textrm{in}\ C^{k}[Y, E]\ \textrm{for every}\ k. 
\end{equation}
This is a metric topology by \cite[Section 1.46]{RudinFun}. 

Although we can use the $C^{k}-$norm with respect to a fixed open cover, it is heuristic to make the following remark. Because $Y$ is compact so  there are finite covers, the $C^{k}-$norms defined by different finite open covers (with the associated trivializations) are equivalent. 

Suppose $s$ is a continuous section of $\pi^{\star}E\rightarrow Y\times [0,2\pi]$ which is smooth on $Y\times (0,2\pi)$. $s$ is said to be smooth on $\pi^{\star}E\rightarrow Y\times [0,2\pi]$ if under the $C^{\infty}[Y,E]-$topology, for any $k\geq 0$, both $\lim_{t\rightarrow 0}\frac{\partial ^{k}s}{\partial t^{k}}$ and $\lim_{t\rightarrow 2\pi}\frac{\partial ^{k}s}{\partial t^{k}}$ exist. 
 Then for any $k\geq0$,  $\frac{\partial ^{k}s}{\partial t^{k}}$ extends continuously to $Y\times [0,2\pi]$. The values at the end points are still denoted by $\frac{\partial ^{k}s}{\partial t^{k}}(0)$ and  $\frac{\partial ^{k}s}{\partial t^{k}}(2\pi)$ respectively. 
 The $C^{k}-$norm on $Y\times [0,2\pi]$ is defined naturally as 
 \begin{equation} ||s||_{C^{k}\{Y\times [0,2\pi],\pi^{\star}E\}}\triangleq \sup_{0\leq i+j\leq k,\ t_{0}\in [0,2\pi]} ||\frac{\partial^{i}s}{\partial t^{i}}(t_{0})||_{C^{j}[Y,E]}.
 \end{equation}

 A smooth connection on $\pi^{\star}E\rightarrow Y\times [0,2\pi]$ is defined similarly. 
 \end{Def}
 \subsection*{Analyticity of the square root function of positive Hermitian matrices}
We now turn to the analyticity of the matrix square root function. This  is applied in Lemma \ref{lem there exists a gauge connecting id to arbitrary a} to show that the broken gauge is smooth. For lack of reference, we still give the full proof. 

\begin{lem}\label{lem sqrt is real analytic}In view of Claim \ref{clm unique square root}, the map $\sqrt{\cdot}\ :Herm^{+}_{m\times m}\rightarrow Herm^{+}_{m\times m}$ is real-analytic. 
\end{lem}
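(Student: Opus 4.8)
The plan is to realize $\sqrt{\cdot}$ as the local inverse of the squaring map and then invoke the real-analytic inverse function theorem. Set $\mathcal{S}\colon Herm_{m\times m}\to Herm_{m\times m}$, $\mathcal{S}(h)=h^{2}$. Since $(h^{2})^{\star}=(h^{\star})^{2}=h^{2}$, this is a well-defined self-map of the real vector space $Herm_{m\times m}$, and it is polynomial in the entries of $h$, hence real-analytic. The set $Herm_{m\times m}^{+}$ is open in $Herm_{m\times m}$ (positive-definiteness is an open condition), and by Claim \ref{clm unique square root} the restriction $\mathcal{S}\colon Herm_{m\times m}^{+}\to Herm_{m\times m}^{+}$ is a bijection whose inverse is precisely $\sqrt{\cdot}$. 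It therefore suffices to show that $\mathcal{S}$ admits real-analytic local inverse branches at every point of $Herm_{m\times m}^{+}$.

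The key computation is the differential. For $h_{0}\in Herm_{m\times m}^{+}$ and $k\in Herm_{m\times m}$, expanding $\mathcal{S}(h_{0}+tk)=h_{0}^{2}+t(h_{0}k+kh_{0})+t^{2}k^{2}$ gives
\[
D\mathcal{S}|_{h_{0}}(k)=h_{0}k+kh_{0},
\]
the Sylvester (Lyapunov) operator, which indeed maps $Herm_{m\times m}$ to itself. I would show it is invertible. Diagonalize $h_{0}=U\Lambda U^{\star}$ with $U$ unitary and $\Lambda=\mathrm{diag}(\lambda_{1},\dots,\lambda_{m})$, all $\lambda_{i}>0$; conjugating by $U$ turns the operator into $\tilde{k}\mapsto \Lambda\tilde{k}+\tilde{k}\Lambda$, whose $(i,j)$-entry is $(\lambda_{i}+\lambda_{j})\tilde{k}_{ij}$. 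In this basis the operator is diagonal with eigenvalues $\lambda_{i}+\lambda_{j}>0$, so it is invertible. This is the one place where positive-definiteness of $h_{0}$ is genuinely used.

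With $D\mathcal{S}$ invertible at every point of $Herm_{m\times m}^{+}$, the finite-dimensional real-analytic inverse function theorem applies: near each $H_{0}=h_{0}^{2}\in Herm_{m\times m}^{+}$ there is a real-analytic local inverse of $\mathcal{S}$. By the uniqueness half of Claim \ref{clm unique square root}, this local inverse must agree with the globally defined $\sqrt{\cdot}$ on a neighborhood of $H_{0}$. Since $H_{0}$ is arbitrary, $\sqrt{\cdot}$ is real-analytic on all of $Herm_{m\times m}^{+}$.

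The steps are routine, and the only point requiring any care is the eigenvalue computation establishing invertibility of the Sylvester operator $k\mapsto h_{0}k+kh_{0}$. (As an alternative that avoids the inverse function theorem, one could write $\sqrt{H}=\frac{1}{2\pi i}\oint_{\Gamma}\sqrt{z}\,(zI-H)^{-1}\,dz$ over a contour $\Gamma$ enclosing the spectrum, which lies in $(0,\infty)$; the resolvent entries are rational in the entries of $H$, so real-analyticity follows by differentiating under the integral sign, provided one checks a single contour works uniformly on a neighborhood. The inverse-function-theorem route seems cleanest.)
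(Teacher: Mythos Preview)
Your proof is correct and follows essentially the same approach as the paper: both reduce the question to invertibility of the Sylvester operator $k\mapsto h_{0}k+kh_{0}$ and then invoke the real-analytic inverse/implicit function theorem together with the uniqueness in Claim \ref{clm unique square root}. The only cosmetic differences are that the paper phrases it via the implicit function theorem for $F(H,h)=H-h^{2}$ and checks $\ker(h_{0}\cdot+\cdot\,h_{0})=0$ by an eigenvector inner-product argument, whereas you diagonalize $h_{0}$ to read off the eigenvalues $\lambda_{i}+\lambda_{j}>0$ directly.
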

\begin{proof} The idea is to interpret $\sqrt{.}$ as an implicit function, then use the implicit function theorem. 
We consider $F(H,h)\triangleq H-h^{2}:\ Herm^{+}_{m\times m}\oplus Herm^{+}_{m\times m}\rightarrow Herm_{m\times m}$. For any $H_{0},\ h_{0}$ such that  $F(H_{0},h_{0})=0$, it suffices to show that the linearization $L_{h,(H_{0},h_{0})}: Herm_{m \times m}\rightarrow Herm_{m\times m}$ with respect to $h$ is invertible. We calculate 
\begin{equation} -L_{h,(H_{0},h_{0})}g=h_{0}g+gh_{0},\ \textrm{where}\ g \ \textrm{is the variation of}\  h. 
\end{equation}
Suppose 
\begin{equation}h_{0}g+gh_{0}=0. 
\end{equation}
For any eigenvalue $\mu$ of $g$, let $v$ be a corresponding eigenvector. Because $h_{0}$ and $g$ are both Hermitian, $\mu$ must be real,  and we compute
\begin{equation*}0=(h_{0}gv,v)+(gh_{0}v,v)=2\mu(h_{0}v,v)\ \textrm{where}\ ``(\cdot,\cdot)'' \ \textrm{is the Euclidean Hermitian product}. 
\end{equation*}
Because $h_{0}$ is positive definite Hermitian,  $(h_{0}v,v)  >0$. Then $\mu=0$. Because $\mu$ is an arbitrary eigenvalue of $g$,  $g$ must be $0$. Therefore $KerL_{h,(H_{0},h_{0})}=\{0\}$, and $L_{h,(H_{0},h_{0})}$ is an linear isomorphism from $Herm_{m\times m}$ to itself. 

By the analytic implicit function theorem (see \cite[Page 1081]{Whittlesey}), and the uniqueness of the square root, $h(H)=\sqrt{H}$ is real-analytic near $H_{0}$. Because $H_{0}$ is arbitrary, the proof is complete. 
\end{proof}

\subsection*{Producing a stabilizer from an ``approximately parallel" gauge}
Briefly speaking, the following result provides the limit stabilizer $a$ in \eqref{equ 2.5 proof of Thm Rigidity top I3}. 
 \begin{lem}\label{lem close to the stabilizer}In the setting of Proposition \ref{prop hard continuity}, for any $\epsilon>0$, there is a $\delta$ with the following property. Suppose $\eta$ is a gauge on $E\rightarrow Y$ and  $||\eta^{-1}d_{B}\eta||<\delta$. Then there is an $a\in \Gamma_{B}$ such that $d_{B}a=0\, \textrm{and}\ ||\eta-a||<\epsilon$. 
   \end{lem}
   \begin{proof}If not, there is an $\epsilon>0$ and a sequence $\eta_{j}$ such that 
\begin{equation}\label{equ 0 lem close to the stabilizer}||\eta_{j}^{-1}d_{B}\eta_{j}||\rightarrow 0,\end{equation}
 but  for any $j$, $||s-\eta_{j}||<\epsilon\Longrightarrow d_{A}s\neq 0$.  
 
Because $\eta_{j}$ is unitary, it is bounded. Then  \eqref{equ 0 lem close to the stabilizer} implies that $||\eta_{j}||+||d_{B}\eta_{j}||\leq C$, where $C$ is independent of $j$. Moreover, $||d_{B}\eta_{j}||\rightarrow 0$. Then Arzela-Ascoli Theorem implies that $\eta_{j}$ sub-converges uniformly to $a$ (in $C^{0}[Y, End(E)]$).
 \begin{clm}\label{clm lem close to the stabilizer} At any point $p\in Y$, and under any coordinate chart of $EndE$,  $a$ admits all (first order) partial derivatives, and $d_{B}a=0$ at $p$ (defined only by partial derivatives of $a$). \end{clm}
 Assuming the above claim, because the connection $B$ is smooth, the condition $d_{B}a=0$ implies by definition that  $a$ is smooth. Thus $a\in \Gamma_{B}$. This is a contradiction to the line below \eqref{equ 0 lem close to the stabilizer}. 
 
It remains to prove Claim \ref{clm lem close to the stabilizer}. It is completely elementary.  The idea is very simple:   in the coordinate chart, the endomorphisms $a$ and $\eta_{i}$ are matrix-valued functions.  On any line segment passing through $p$, apply the classical fact \cite[Theorem 7.17]{Rudin} on single variable calculus. This classical fact  says that suppose a sequence of functions  on the interval $[a,b]$ converges at one point, and the sequence of derivatives converges uniformly. Then the sequence converges uniformly, and the derivative of the limit exists and is equal to the limit of the sequence of derivatives. 

We now carry out the elementary detail.  Under a coordinate chart for $EndE$, let $l_{p}(t)$ be a closed line segment through $p$, $t\in [-1,1]$. The condition \eqref{equ 0 lem close to the stabilizer} on covariant derivative of $\eta_{j}$ and the  condition that $\eta_{j}$ converges uniformly imply that on the line segment $l_{p}(t)$,  $\frac{d\eta_{j}}{dt}$ converges uniformly. This means the two conditions in \cite[Theorem 7.17]{Rudin} are satisfied.  Then it says that
$a$  is differentiable in $t$, and $\lim_{j\rightarrow \infty} \frac{d\eta_{j}}{dt}=\frac{da}{dt}$. Because the line segment is arbitrary, this means $a$ admits all partial derivatives (under the coordinate chart) at an arbitrary point $p$. 

The condition that $d_{B}\eta_{j}\rightarrow 0$ uniformly implies that on the line segment,  $\frac{d\eta_{j}}{dt}+[B(\dot{l}_{p}),\eta_{j}]$ tends to $0$ uniformly. Because $\frac{d\eta_{j}}{dt}$ converges uniformly to $\frac{da}{dt}$,  and $\eta_{j}$ converges uniformly to $a$,  we find $\frac{da}{dt}+[B(\dot{l}_{p}),a]=0$. This implies  $\nabla_{B,\dot{l}_{p}} a=0$ at $p$.  Again, because $p$ and $l_{p}$ are arbitrary,  $d_{B}a=0$ everywhere. The proof of Claim \ref{clm lem close to the stabilizer} is complete. \end{proof}

\small

\end{document}